 \newtheorem{thm}{Theorem}
 \newtheorem{cor}{Corollary}
 \newtheorem{prop}{Proposition}
 \newtheorem{lem}{Lemma}
\theoremstyle{definition}
 \newtheorem{rem}{Remark}}
\theoremstyle{definition}
 \newtheorem{defn}{Definition}}
\theoremstyle{definition}
 \newtheorem{exam}{Example}}
\theoremstyle{definition}
 \newtheorem{conj}{Conjecture}}
\begin{document}  
\newcommand{\R}{\mathbb{R}}
\newcommand{\Q}{\mathbb{Q}}
\newcommand{\Z}{\mathbb{Z}}
\newcommand{\ttwo}{\overline{t'_{2}}}
\newcommand{\mQ}{\mathcal{Q}}
\newcommand{\mW}{\mathcal{W}}
\newcommand{\CB}{\mathcal{CB}}
\newcommand{\wW}{\widetilde{W}}

\newcommand{\red}{\textrm{red}\,}

\title{Non-left-orderable double branched coverings}
\author{Tetsuya Ito}
\address{Graduate School of Mathematical Science, University of Tokyo, 3-8-1 Komaba Meguro-ku Tokyo 153-8914, Japan}
\email{tetitoh@ms.u-tokyo.ac.jp}
\urladdr{http://ms.u-tokyo.ac.jp/~tetitoh}
\subjclass[2000]{Primary~57M05, Secondary~ 57M12, 57M27}
\keywords{Non-left-orderable 3-manifold group, coarse presentation, left ordering, L-space}
 
\begin{abstract}
We develop a method to show the fundamental group of the double branched covering of a link is not left-orderable by introducing the notion of the coarse presentation. As in the usual group presentations, a coarse presentation is given by a set of generators and relations, but inequalities are allowed as relations. By using coarse presentation, we give a family of links whose double branched covering has the non-left-orderable fundamental group. Our family of links includes many known examples and new examples.
\end{abstract}
\maketitle

\section{Introduction}

Let $G$ be a group. A {\em left ordering} of $G$ is a total ordering $<_{G}$ of $G$ which is preserved by the left action of $G$. That is, $ a<_{G} b$ implies $ga <_{G} gb$ for all $g,a,b\in G$.
$G$ is called {\em left-orderable} if $G$ admits at least one left ordering.
We adapt the convention that the trivial group $G=\{1\}$ is {\em not} left-orderable. 
As we will see in Section 5, this convention is natural when we study the relation between left orderings and topology of 3-manifolds.

For a link $L$ in $S^{3}$, let $\Sigma_{2}(L)$ be the double branched covering of $L$.
In this paper we study the non-existence of left orderings of $\pi_{1}(\Sigma_{2}(L))$, the fundamental group of the double branched covering. The aim of this paper is to give a new method to show $\pi_{1}(\Sigma_{2}(L))$ is {\em not} left-orderable, by introducing a new notion called a {\em coarse presentation}.  

A coarse presentation is a generalization of a group presentation. Like a usual group presentation, a coarse presentation is given as a set of generators and a set of relations. The main difference is that a coarse presentation allows to include {\em inequalities} as its relations.

We construct a coarse presentation associated to a link diagram $D$ which we will call the {\em coarse Brunner's presentation}. Based on the coarse Brunner's presentation, we will give various families of links whose double branched covering has non-left-orderable fundamental group.
In Theorem \ref{thm:simalter}, we will treat links represented by diagrams which is similar to alternating link diagrams in the coarse presentation point of view. This family of links contains all alternating links, hence extends the results obtained in \cite{bgw},\cite{gr}: the double branched covering of an alternating link has the non-left-orderable fundamental group.
We will also treat various non-alternating links as well. In Theorem \ref{thm:main} and Theorem  \ref{thm:main2}, we will show that the double branched covering of links represented by some particular diagrams have the non-left-orderable fundamental group. These family of links contain all positive knots of genus two and many non-alternating links. The reader should regard these Theorems rather as examples of coarse presentation arguments. In a similar manner, the coarse presentation allows us to find a lot of other examples of links whose branched double covering have non-left-orderable fundamental groups.

Although it is possible to prove all results in this paper by using the usual group presentations, the coarse presentation argument has several benefits. First of all, the coarse presentation is much simpler than the usual group presentation: it has less generators and less relations than the usual group presentation. In particular, by using coarse presentation, the proof of non-left-ordrability becomes much simpler compared with the proof based on the usual group presentation. 
 Moreover, in the proof of non-left-orderability, the coarse presentation allows us to separate the role of the link diagrams into the {\em local} properties and the {\em global} properties. 
That is, our arguments of non-left-orderability are valid if we replace a crossing with an (algebraic) tangle which has a property similar to the original crossing. Thus the coarse presentation argument provides more unified point of view in the proof of non-left-orderability.

Let us sketch here a main idea of the coarse presentation argument.
Let $D$ be a diagram representing a link $L$.
In \cite{br}, Brunner constructed a presentation of $\pi_{1}(\Sigma_{2}(L))$ from the diagram $D$. 
This presentation has a lot of generators and relations. We try to extract essential information which is sufficient to show $\pi_{1}(\Sigma_{2}(L))$ is not left-orderable. 

For an algebraic tangle $A$ in the diagram $D$, we associate special elements $W_{A},R,L$ of $\pi_{1}(\Sigma_{2}(L))$, which will serve as generators of the coarse presentation. By studying Brunner's presentation of $\pi_{1}(\Sigma_{2}(L))$ precisely, we find the commutative relation $(L^{-1}R) W_{A} = W_{A} (L^{-1}R)$ and an inequality $(L^{-1}R)^{m} \leq W_{A}^{N} \leq (L^{-1}R)^{M}$ which are valid for {\em all} left orderings of $\pi_{1}(\Sigma_{2}(L))$. We will express this commutative relation and ``universal" inequalities by $W_{A} \in [[\frac{m}{N}, \frac{M}{N}]]_{L^{-1}R}$. This ``universal" inequality is obtained from only the tangle diagram $A$. This is what we referred as the ``local properties".

The coarse presentation is a collection of a set of generators, a set of ``universal" inequalities, and a set of relations given as usual equalities, which corresponds to what we referred as the ``global properties". The usual equalities describe how algebraic tangles are connected in the whole link diagram. 
These two kinds of information allow us to show that $\pi_{1}(\Sigma_{2}(L))$ is not left-orderable: If $\pi_{1}(\Sigma_{2}(L))$ has left-ordering, we deduce a contradiction from the coarse presentation.

The plan of this paper is as follows. In Section \ref{sec:pre} we review standard notions of tangles, and a presentation of $\pi_{1}(\Sigma_{2}(L))$ due to Brunner \cite{br}, which is our starting point.
We construct the coarse Brunner's presentation in Section 3. The construction of coarse Brunner's presentation consists of three steps: First we deduce an intermediate group presentation that simplifies the Brunner's presentation, which we call the {\em reduced Brunner's presentation}. 
In the second step we define the tangle element $W_{A}$ for an algebraic tangle in a link diagram, and study its fundamental properties. The third step consists of establishing the ``universal" inequalities.
By combining all information altogether, we get the coarse Brunner's presentation.
By using the coarse Brunner's presentation, in Section 4 we will give various family of links whose double branched covering has the non-left orderable fundamental group. In Section 5 we briefly discuss a relationship between the results in this paper and the L-space conjectures.\\

\textbf{Acknowledgment}
The author would like to thank Joshua Greene and Liam Watson for comments for eariler version of the paper.
This research was supported by JSPS Research Fellowships for Young Scientists.

\section{Presentation of the fundamental group of double branched cover}
\label{sec:pre}
\subsection{Algebraic and rational tangles}

First we review basic notions of tangles. See \cite{kl} for fundamental facts on rational tangles. For a positive integer $m$, we define the elementary tangles $[\pm m]$ and $[\pm \frac{1}{m}]$ by Figure \ref{fig:tangle}. For two tangles $P$ and $Q$, the {\em tangle sum} $P+Q$ and {\em the tangle product} $P*Q$ are defined as in Figure \ref{fig:tangle}.

\begin{figure}[htbp]
 \begin{center}
\includegraphics[width=100mm]{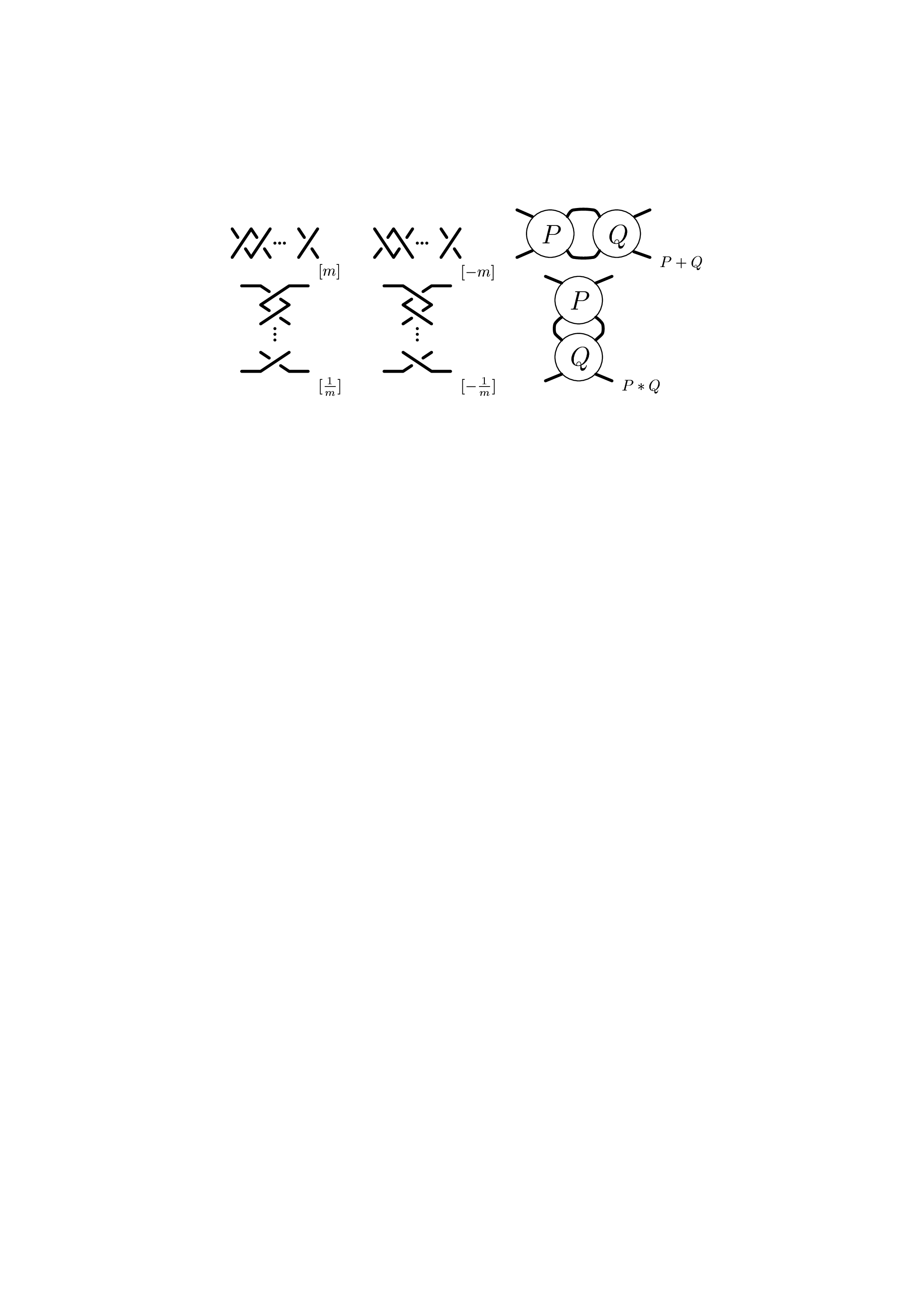}
 \end{center}
 \caption{Elementary tangles and tangle sum, products}
 \label{fig:tangle}
\end{figure}

An {\em algebraic tangle} is a tangle obtained from elementary tangles by repetitions of $+$ and $*$ operations. 

For a non-zero rational number $\frac{q}{p}$, let us take its continued fraction 
\[ \frac{q}{p} = a_{1} + \cfrac{1}{a_{2}+ \cfrac{1}{a_{3} + \cdots}}. \]  

A {\em rational tangle} $Q(\frac{q}{p})$ is an algebraic tangle defined by
\[ Q\left(\frac{q}{p}\right) = 
\left( \cdots \left(
\left([a_{n}]*\frac{1}{[a_{n-1}]}\right)+[a_{n-2}] \right) * \cdots * \frac{1}{[a_{2}]}\right) +[a_{1}]
\]

The isotopy class of the rational tangle $Q(\frac{q}{p})$ does not depend on the choice of the continued fractions. In particular, for a non-zero integer $m$, the rational tangle $Q(m)$ and $Q(\frac{1}{m})$ are isomorphic to the elementary tangle $[m]$ and $[\frac{1}{m}]$.

\subsection{Brunner's presentation}

In this section we review the Brunner's presentation of the fundamental group of double-branched coverings.
Let $D$ be a diagram of a link $L$ in $S^{3}$ and consider the checker board coloring of $D$. We always choose the checker board coloring so that the color of the unbounded region is white. Then regions colored by black defines a (possibly non-orientable) compact surface which bounds $L$. We call this surface the {\em checker board surface}. 

The checker board surface is decomposed as a union of discs and twisted bands.
We choose the maximal disc-twisted band decomposition: the disc-twisted band decomposition having the minimal number of twisted bands. For such a disc-twisted band decomposition of the checker board surface, we associate the labeled planer graph $G$ which we call the {\em decomposition graph}, and the oriented planer graph $\widetilde{G}$ which we call the {\em connectivity graph}.

The vertices of the decomposition graph $G$ are the discs in the maximal disc-twisted band decomposition. For a twisted band connecting two discs, we assign an edge of $G$ so that it connects corresponding vertices. The label of edges are non-zero integer $i(e)$ determined by the signed number of the twisting, as shown in Figure \ref{fig:graph}. We call a component of $\R^{2}- G$ a {\em region} of the link diagram $D$. By definition, a region of $D$ is identified with a white-colored region of the diagram $D$.

\begin{figure}[htbp]
 \begin{center}
\includegraphics[width=100mm]{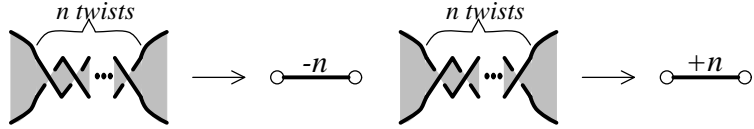}
 \end{center}
 \caption{Labeled edge of the graph $G$}
 \label{fig:graph}
\end{figure}

The connectivity graph $\widetilde{G}$ is obtained from $G$ as follows.
The vertices of $\widetilde{G}$ is the same the vertices of $G$. We connect two vertices of $\widetilde{G}$ by one edge if and only if there exists a twisted band connecting two discs which correspond to the vertices. We choose an arbitrary orientation of edges and make $\widetilde{G}$ as an oriented graph.
We say an edge $w$ of $G$ {\em corresponds} to the edge $W$ of $\widetilde{G}$ if $w$ and $W$ connects the same vertices. 
The orientation of edges in $\widetilde{G}$ induces an orientation of corresponding edges of $G$ so we will always regard $G$ as oriented graph once we choose the connectivity graph $\widetilde{G}$.

Now we are ready to give Brunner's presentation of $\pi_{1}(\Sigma_{2}(L))$.

\begin{thm}[Brunner's presentation of $\pi_{1}(\Sigma_{2}(L))$ \cite{br}]
Let $L$ be an unsplittable link in $S^{3}$ represented by a diagram $D$, and let $G$, $\widetilde{G}$ be the decomposition graph and the connectivity graph.
Then the fundamental group of $\Sigma_{2}(L)$ has the following presentation.\\

\noindent
{\bf [Generators]}
\begin{description}
\item[Edge generators] $\{W_{i}\}$, the set of edges of $\widetilde{G}$.
\item[Region generators] $\{R_{i}\}$, the set of regions of the link diagram $D$.
\end{description}
\noindent
{\bf [Relations]}
\begin{description}
\item[Local edge relations] $W=(R_{l}^{-1}R_{r})^{a}$. Here $R_{l}$, $R_{r}$ are left- and right- adjacent regions of an edge $w$ of $G$ which corresponds to the edge $W$ of $\widetilde{G}$. The exponent $a$ is the label of the edge $w$.
\item[Global cycle relations] $W_{n}^{\pm1} \cdots W_{1}^{\pm 1}=1$ if the edge-path $W_{n}^{\pm 1}\cdots W_{1}^{\pm 1}$ forms a loop in $\R^{2}$. Here $W_{i}^{-1}$ means the edge $W_{i}$ with the opposite orientation.
\item[Vanishing relation] $R_{0}=1$ where $R_{0}$ is the unbounded region.
\end{description}
\end{thm}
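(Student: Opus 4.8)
The plan is to reprove Brunner's theorem by building $\Sigma_{2}(L)$ explicitly from the disc--twisted-band decomposition of the black checker board surface $F$ and then reading off the presentation by the Seifert--van Kampen theorem. Write $p\colon\Sigma_{2}(L)\to S^{3}$ for the branched covering map, $N(F)$ for a regular neighbourhood of $F$ (a possibly twisted $I$-bundle over $F$ containing $L=\partial F$ in its interior), and $E(F)=S^{3}\setminus\mathrm{int}\,N(F)$ for the exterior. First I would observe that every loop lying in $E(F)$, or in $\partial N(F)$, is disjoint from $F$ and hence has linking number $0$ with $L$; therefore $p$ restricts to the \emph{trivial} double cover over $E(F)$ and over $\partial N(F)$, so (using that $D$ is connected, which holds since $L$ is unsplittable) $p^{-1}(E(F))$ is a disjoint union $E_{1}\sqcup E_{2}$ of two copies of $E(F)$. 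On the other hand $p^{-1}(N(F))$ is the double branched cover of the $I$-bundle $N(F)$ branched over $\partial F$; it deformation retracts onto $\widetilde{F}:=p^{-1}(F)$, the double cover of $F$ branched over $\partial F$, and its two boundary pieces are glued to $\partial E_{1}$ and $\partial E_{2}$ by the two natural identifications, which differ by the deck involution of $\widetilde{F}\to F$. Respecting the disc--band decomposition of $F$, the surface $\widetilde{F}$ is assembled from the branched double covers of the discs (planar surfaces) and of the twisted bands (annuli), so $p^{-1}(N(F))$ is a union of handlebodies coming from the discs and of solid tori coming from the bands; the core circle of the solid torus over a band is what I would take as the corresponding edge generator $W_{i}$. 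Finally, because $F$ is a checker board surface, $E(F)$ deformation retracts onto a spine built from the white regions of $D$ and the crossings; thus $E(F)$ is a handlebody whose free fundamental group has a basis consisting of one loop around each bounded white region, and these loops together with the (trivial) loop around the unbounded region are the region generators $R_{i}$.

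Next I would feed this decomposition into the Seifert--van Kampen theorem. The claim is that $\pi_{1}(\Sigma_{2}(L))$ is generated by the edge generators $W_{i}$ and the region generators $R_{i}$ --- the disc handlebodies and the second copy $E_{2}$ being redundant, since the relevant gluing surfaces are $\pi_{1}$-surjective into them --- and that, choosing the base point in the lift of the unbounded region, the vanishing relation $R_{0}=1$ holds. When the core $W$ of the solid torus over a band (that is, over an edge $w$ of $G$) is pushed into an adjacent copy of $E(F)$, it becomes a loop running exactly once through each of the two white regions $R_{l},R_{r}$ flanking $w$, acquiring one factor $R_{l}^{-1}R_{r}$ per unit of the twisting label, which yields the local edge relation $W=(R_{l}^{-1}R_{r})^{a}$ with $a=i(w)$. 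A closed edge-path $W_{n}^{\pm1}\cdots W_{1}^{\pm1}$ that forms a loop in the projection plane bounds a disc there; pushed slightly off $F$ into $E_{1}$ this disc becomes the lift of the enclosed white region, so the edge-path is null-homotopic, giving the global cycle relation $W_{n}^{\pm1}\cdots W_{1}^{\pm1}=1$. To finish I would run a bookkeeping argument --- comparing ranks via Euler characteristics of the handlebodies and of $\widetilde{F}$, or directly tracking the amalgamation data --- to confirm that these are all the generators and all the relations.

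The hard part will be carrying out the geometric decomposition honestly in a neighbourhood of the branch locus $L$, where the local model of $p$ is $z\mapsto z^{2}$: it is precisely this that makes a neighbourhood of a twisted band lift to a \emph{solid torus} rather than to two balls, and that pins down the framing of that solid torus so that exactly $a=i(w)$ copies of $R_{l}^{-1}R_{r}$ occur in the edge relation. Getting this framing right, together with the planarity hypothesis in the cycle relations and the completeness of the relation set, is the delicate point; the rest is routine Seifert--van Kampen bookkeeping. If this geometric route proves cumbersome, an alternative purely algebraic argument avoids the branch locus altogether: begin with the Dehn presentation of $\pi_{1}(S^{3}\setminus L)$ (generators $=$ regions of $D$, one relation $R_{a}R_{b}^{-1}R_{c}R_{d}^{-1}=1$ per crossing, plus $R_{0}=1$), pass by Reidemeister--Schreier to the index-two subgroup determined by the checker board $\Z/2$-colouring, and then kill the squares of the meridians to descend from the covering of the link complement to the branched covering; a simplification of the resulting presentation should recover the one above, with the edge generators $W_{i}$ emerging from the black regions and the region generators $R_{i}$ from the white ones.
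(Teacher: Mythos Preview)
The paper does not prove this statement at all: it is quoted verbatim as a theorem of Brunner, with a citation to \cite{br}, and is used only as a black box on which the rest of the paper builds. So there is no ``paper's own proof'' to compare your attempt against.

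That said, your outline is essentially the standard way this result is established (and is close in spirit to Brunner's original argument): split $S^{3}$ along the checker board surface $F$, observe that the $\mathbb{Z}/2$-cover trivializes over the complement $E(F)$, analyse $p^{-1}(N(F))$ piece by piece according to the disc--band decomposition, and assemble with Seifert--van Kampen. One small correction: a loop in $E(F)$ need not have linking number \emph{zero} with $L$, only \emph{even} linking number---since $F$ may be non-orientable, only the mod-$2$ intersection pairing with $F$ is available, and that is exactly what controls lifting to the double cover. This does not affect your conclusion. You have correctly located the one genuinely delicate step, namely the framing computation over a twisted band that produces the exponent $a=i(w)$ in the local edge relation; everything else is, as you say, bookkeeping. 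The alternative Dehn-presentation/Reidemeister--Schreier route you sketch at the end also works and is arguably tidier for verifying that no relations are missing.
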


Here we use the convention that $W_{2}W_{1}$ represents the edge path which goes along $W_{1}$ first, then goes along $W_{2}$.
We close this section by giving an example of Brunner's presentation.

\begin{exam}
\label{exam:941}
Let us consider the knot diagram $D$ and consider the decomposition graph $G$ and the connectivity graph $\widetilde{G}$ as in Figure \ref{fig:exam}. 
Then by Brunner's theorem, the presentation of $\pi_{1}(\Sigma_{2}(L))$ is given by
\[ \left\langle \begin{array}{c} A,B,C\\ W_{1},\ldots, W_{6}\end{array} 
\begin{array}{|lll}
W_{1}= A^{2} & W_{2}= B^{2} & W_{3}= C \\
W_{4}=(B^{-1}A)^{2} & W_{5}=(B^{-1}C)^{-1} & W_{6}=(C^{-1}A)^{-1} \\
W_{6}W_{4}W_{1}=1 & W_{4}^{-1}W_{5}^{-1}W_{2}=1 & W_{6}^{-1}W_{3}^{-1}W_{5}=1
\end{array} \right\rangle \]
Here we used the vanishing relation to remove the trivial region generator.
\begin{figure}[htbp]
 \begin{center}
\includegraphics[width=110mm]{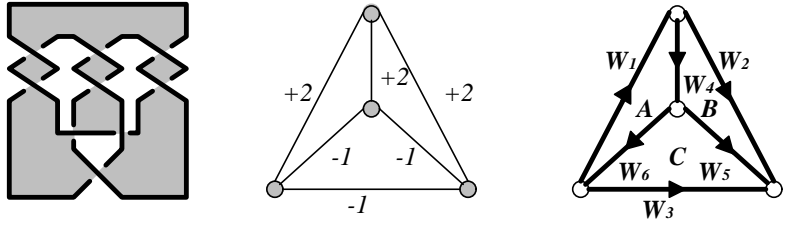}
 \end{center}
 \caption{Example of Brunner's presentation}
 \label{fig:exam}
\end{figure}
\end{exam}

\section{Coarse Brunner's presentation}

In this section we construct the coarse Brunner's presentation by modifying Brunner's presentation.

 \subsection{Reduced Brunner's presentation}

As the first step, we modify the Brunner's presentation to obtain a new group presentation having a simpler form.

Let $D$ be a link diagram and let $A$ be an algebraic tangle which is a part of the diagram $D$. We define the east, west, south and north side of $A$ as in Figure \ref{fig:tancol}. We say $A$ is {\em compatible} with the checker board coloring if for the checkerboard coloring of the diagram $D$, the west- and the east- side of $A$ is colored by black.
We always regard $A$ as oriented from the east side to the west side, hence we call a region generator of $\pi_{1}(\Sigma_{2}(D))$ which corresponds to the north (resp. south) region of $A$ the {\em left-adjacent region} (resp. the {\em right-adjacent region}) of $A$. See Figure \ref{fig:tancol}.
We denote the left- and right- adjacent regions of $A$ by $R_{l}(A)$, $R_{r}(A)$, respectively. 

\begin{figure}[htbp]
 \begin{center}
\includegraphics[width=70mm]{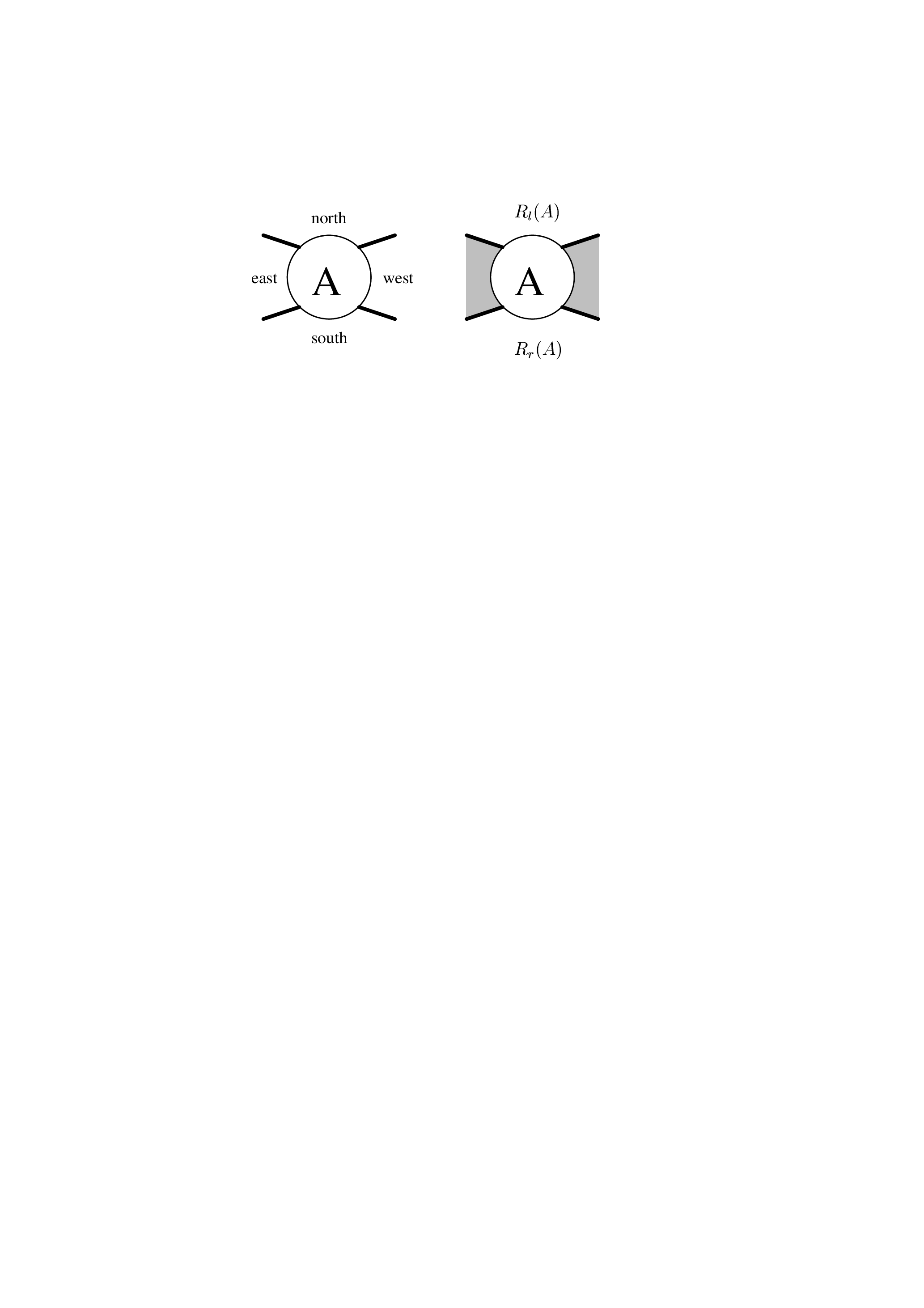}
 \end{center}
 \caption{Adjacent regions and compatible checker board coloring}
 \label{fig:tancol}
\end{figure}

Now we consider the case $A$ is a rational tangle $Q=Q(\pm \frac{1}{m})$.
Then the subgraph of the decomposition graph $G$ derived from sub-diagram $Q$ is $m$ parallel edges connecting the same vertices. All edges have the same label $\pm 1$. We will represents such a sub-graph by one edge with label $\pm \frac{1}{m}$, as shown in Figure \ref{fig:reduced}. 
We call the graph obtained by such a replacement {\em the reduced decomposition graph} and denote by $\red(G)$.

\begin{figure}[htbp]
 \begin{center}
\includegraphics[width=110mm]{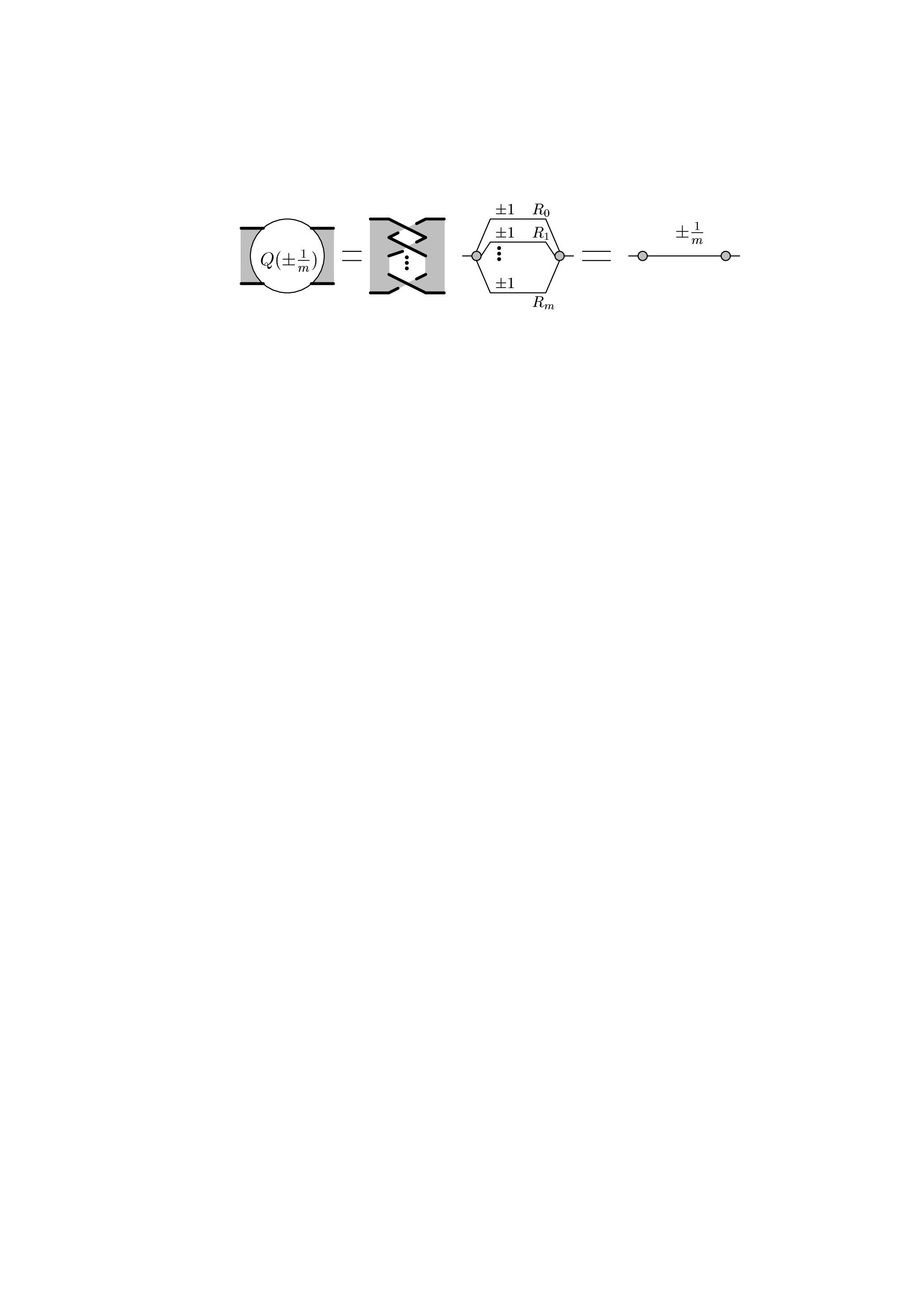}
 \end{center}
 \caption{Reduced decomposition graph}
 \label{fig:reduced}
\end{figure}

The local edge relations derived in the subgraph are given by
\[ W = (R_{0}^{-1}R_{1})^{\pm 1}= (R_{1}^{-1}R_{2})^{\pm 1} = \cdots = (R_{m-1}^{-1}R_{m})^{\pm 1}.\]
where $R_{0},\ldots,R_{m}$ are region generators taken as in Figure \ref{fig:reduced}. 
Observe that $R_{0}=R_{l}=R_{l}(Q)$, the left-adjacent region of $Q$ and $R_{m}=R_{r}=R_{r}(Q)$, the right-adjacent region of $Q$.

From these relations, we obtain a simple relation
\[ W^{m}= (R_{0}^{-1}R_{m})^{\pm 1} = (R_{l}^{-1}R_{r})^{\pm 1} \]
We call this relation the {\em reduced local edge relation} derived from the edge labeled by $\pm \frac{1}{m}$.

We interpret the reduced local edge relation as a generalization of the local edge relations in Brunner's presentation. Recall that in the reduced decomposition graph, the tangle $Q(\pm \frac{1}{m})$ is described as an edge $e$ labeled with $\pm \frac{1}{m}$.
Thus by adapting the local edge relation for the edge $e$ formally, we get a ``local edge relation" 
\[ W = (R_{l}^{-1}R_{r}) ^{\pm \frac{1}{m}}.\]
Of course, this equation is meaningless, but by taking the $m$-th power we get an actual relation of elements in $\pi_{1}(\Sigma_{2}(L))$,
\[ W^{m}= (R_{0}^{-1}R_{m})^{\pm 1}. \]

We remove region generators $R_{1},\ldots,R_{m-1}$ from Brunner's presentation and replace the local edge relations 
\[ W = (R_{0}^{-1}R_{1})^{\pm 1}= (R_{1}^{-1}R_{2})^{\pm 1} = \cdots = (R_{m-1}^{-1}R_{m})^{\pm 1}.\] 
with the reduced local edge relation 
\[ W^{m}= (R_{0}^{-1}R_{m})^{\pm 1} \]
We say the obtained group presentation the {\em reduced Brunner's presentation}. 

As in Brunner's presentation, the reduced Brunner's presentation is obtained from the reduced decomposition graph $\red(G)$ and the connectivity graph $\widetilde{G}$ in a same manner.
 
We denote the group defined by the reduced Brunner's presentation by $\pi(D)$.
We remark that the group $\pi(D)$ given by reduced Brunner's presentation is not the same as the $\pi_{1}(\Sigma_{2}(L))$, since we have lost information for generators $R_{1},\ldots,R_{m-1}$. 

\begin{lem}
\label{lem:non-trivial}
$\pi(D)$ is a non-trivial group.
\end{lem}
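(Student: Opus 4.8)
The plan is to produce a nontrivial quotient of $\pi(D)$, which immediately forces $\pi(D) \neq \{1\}$. The natural candidate is $H_1(\Sigma_2(L);\Z)$, or rather its abelianization-type shadow visible in the reduced presentation. Since $L$ is unsplittable, $\Sigma_2(L)$ is a rational homology sphere only in special cases, but in general $H_1(\Sigma_2(L);\Z)$ is a finite abelian group whose order is $|\det L|$, the determinant of the link, which is nonzero precisely when $L$ is not split into pieces with vanishing determinant; more to the point, there is always a surjection from $\pi_1(\Sigma_2(L))$ onto this homology group, and this surjection should descend through the reduced presentation because abelianizing kills the distinction between the lost generators $R_1,\dots,R_{m-1}$ and the retained ones.

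First I would compare $\pi(D)$ with $\pi_1(\Sigma_2(L))$ directly: the reduced Brunner's presentation is obtained from Brunner's presentation by deleting the intermediate region generators $R_1,\dots,R_{m-1}$ of each rational $Q(\pm\frac1m)$-tangle and collapsing the chain of local edge relations $W = (R_0^{-1}R_1)^{\pm1} = \cdots = (R_{m-1}^{-1}R_m)^{\pm1}$ to the single relation $W^m = (R_0^{-1}R_m)^{\pm1}$. There is therefore an obvious surjective homomorphism $\pi_1(\Sigma_2(L)) \twoheadrightarrow \pi(D)$: every generator of $\pi(D)$ is a generator of Brunner's presentation, and every defining relation of $\pi(D)$ is a consequence of Brunner's relations (the reduced local edge relation is literally the product of the chain relations). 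So it suffices to exhibit a nontrivial quotient of $\pi(D)$ — a contradiction would only arise if $\pi(D)$ were trivial while $\pi_1(\Sigma_2(L))$ is not, which we rule out by finding the quotient.

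The key step is the construction of the quotient. I would abelianize the reduced presentation and identify the resulting abelian group with the first homology of a $2$-complex; concretely, in the abelianization write $r_i$ for the image of $R_i$ and $w_i$ for the image of $W_i$, use the global cycle relations to eliminate the edge generators $w_i$ entirely in terms of the $r_i$ (each $w_i$ becomes $\pm m_i(r_{l}-r_{r})$ additively, reading off the reduced local edge relations), and check that what remains is exactly the cokernel of the Goeritz-type matrix associated to the checkerboard coloring of $D$ — equivalently $H_1(\Sigma_2(L);\Z)$. Since $L$ is unsplittable, $\Sigma_2(L)$ is connected and is not $S^3$ unless $L$ is trivial; in all cases one argues that this homology group is nontrivial, or, in the degenerate case $H_1 = 0$, that $\Sigma_2(L)$ itself is not simply connected (e.g. it is an irreducible $3$-manifold other than $S^3$, or one invokes that a nontrivial unsplittable link cannot have simply connected double cover). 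Either way $\pi(D)$ surjects onto a nontrivial group.

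The main obstacle is the case where $H_1(\Sigma_2(L);\Z) = 0$, i.e. $\det L = \pm 1$ (for instance certain knots); there the abelianization is useless and one must instead argue geometrically that $\pi_1(\Sigma_2(L))$ is nontrivial and that this nontriviality already passes to $\pi(D)$ through a cleverer quotient, or observe that for such $L$ one can still read off a nonabelian nontrivial quotient from the coarse structure. A cleaner route around this obstacle, and the one I would actually pursue, is to avoid homology altogether: map $\pi(D)$ onto the corresponding group built from Brunner's presentation of a simpler sublink or of a connected summand, or simply note that the reduced presentation always retains at least one region generator $R_k$ together with at least one genuinely $\pm1$-labeled edge relation, and construct a surjection onto $\Z/2$ by sending that $R_k \mapsto 1$ and everything else appropriately — checking this is consistent with the global cycle and vanishing relations is the one computation I would actually carry out, and it is short.
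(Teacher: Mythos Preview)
Your argument takes a long detour around a direct route that you nearly stumble onto yourself. The justification you give for the surjection --- ``every generator of $\pi(D)$ is a generator of Brunner's presentation, and every defining relation of $\pi(D)$ is a consequence of Brunner's relations'' --- actually defines a homomorphism in the \emph{opposite} direction, namely $\pi(D)\to\pi_1(\Sigma_2(L))$, sending each retained generator to itself. That map is moreover surjective: the discarded region generators $R_1,\dots,R_{m-1}$ are expressible in $\pi_1(\Sigma_2(L))$ in terms of the retained ones via the original local edge relations $R_i = R_{i-1}W^{\pm 1}$. So $\pi_1(\Sigma_2(L))$ is \emph{itself} a quotient of $\pi(D)$, and since the double branched cover of a nontrivial link in $S^3$ is never simply connected, you are done. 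This is exactly what the paper does (phrased as: if $\pi(D)=1$, then the shared generators are trivial in $\pi_1(\Sigma_2(L))$, whence the remaining generators are too, whence $\pi_1(\Sigma_2(L))=1$, contradiction).

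Because you wrote the arrow the wrong way round, you concluded that the surjection was useless and went hunting for an auxiliary quotient. The abelianization route then runs into the genuine obstacle you identify --- there are knots with $\det = 1$, so $H_1(\Sigma_2(L);\Z)$ can vanish --- and your proposed workarounds (sublinks, connected summands, an ad hoc map to $\Z/2$) are not substantiated; in particular the $\Z/2$ map would itself factor through the abelianization and hence fail in exactly the same cases. None of this is needed once the direction of the natural map is corrected.
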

\begin{proof}
If $\pi(D)$ is trivial, then all region generators in the reduced Brunner's presentation of $\pi(D)$ represents the trivial element. Since the relations in the reduced Brunner's presentation are deduced from Brunner's presentation, this implies that all region generators in Brunner's presentation which appears in reduced Brunner's presentation also represent the trivial element of $\pi_{1}(\Sigma_{2}(L)$. 

Then in turn, by the local edge relations in Brunner's presentation we conclude all other generators in Brunner's presentation also represent the trivial element in $\pi_{1}(\Sigma_{2}(L)$. Thus $\pi_{1}(\Sigma_{2}(L)$ is a trivial group.
This cannot happen since we are considering the double branched covering of links in $S^{3}$.
\end{proof}

\subsection{Tangle elements}

Next we define the tangle element $W_{A}$ which will appear as a generator of the coarse presentation.

Let $A$ be an algebraic tangle in a link diagram $D$ which is compatible with the checker board coloring. We consider the subgraph of the reduced decomposition graph $\red(G)$ and $\widetilde{G}$ which corresponds to the sub-diagram $A$. For each edges in the subgraph of $\widetilde{G}$, we assign an orientation from left to right, in other words, from the west region of $A$ to the east region $A$.

Let us take an oriented simple edge-path of $\widetilde{G}$ which start from the leftmost vertex, the west region of $A$, and end at the rightmost vertex, the east region of $A$.
We call the element of $\pi(D)$ defined by such an oriented edge-path the {\em tangle element} of $A$, and denoted by $W_{A}$. By the global cycle relations $W_{A}$ does not depend on a choice of the edge-path.

We study fundamental properties of a tangle element. We begin with the rational tangle case.
Let \[ Q \left(\frac{q}{p} \right) = 
\left( \cdots \left(
\left([a_{n}]*\frac{1}{[a_{n-1}]}\right)+[a_{n-2}] \right)* \cdots * \frac{1}{[a_{2}]}\right) +[a_{1}]
\]
be a rational tangle. The subgraph of the reduced decomposition graph and the connectivity graph derived from $Q$ are given in Figure \ref{fig:rationalgraph}. Let us take region generators and edge generators as in Figure \ref{fig:rationalgraph}. Thus, $R_{0}=R_{r}(Q)$ and $R_{n}=R_{l}(Q)$.
By definition, the tangle element $W_{Q}$ is given by
\[ W_{Q} = W_{1}W_{3}\cdots W_{2n-1}W_{2n}. \]

We remark that the two regions $R_{0}$ and $R_{n}$ might be the same region if we consider the whole diagram $D$, but at this moment we regard them as different regions.

\begin{figure}[htbp]
 \begin{center}
\includegraphics[width=120mm]{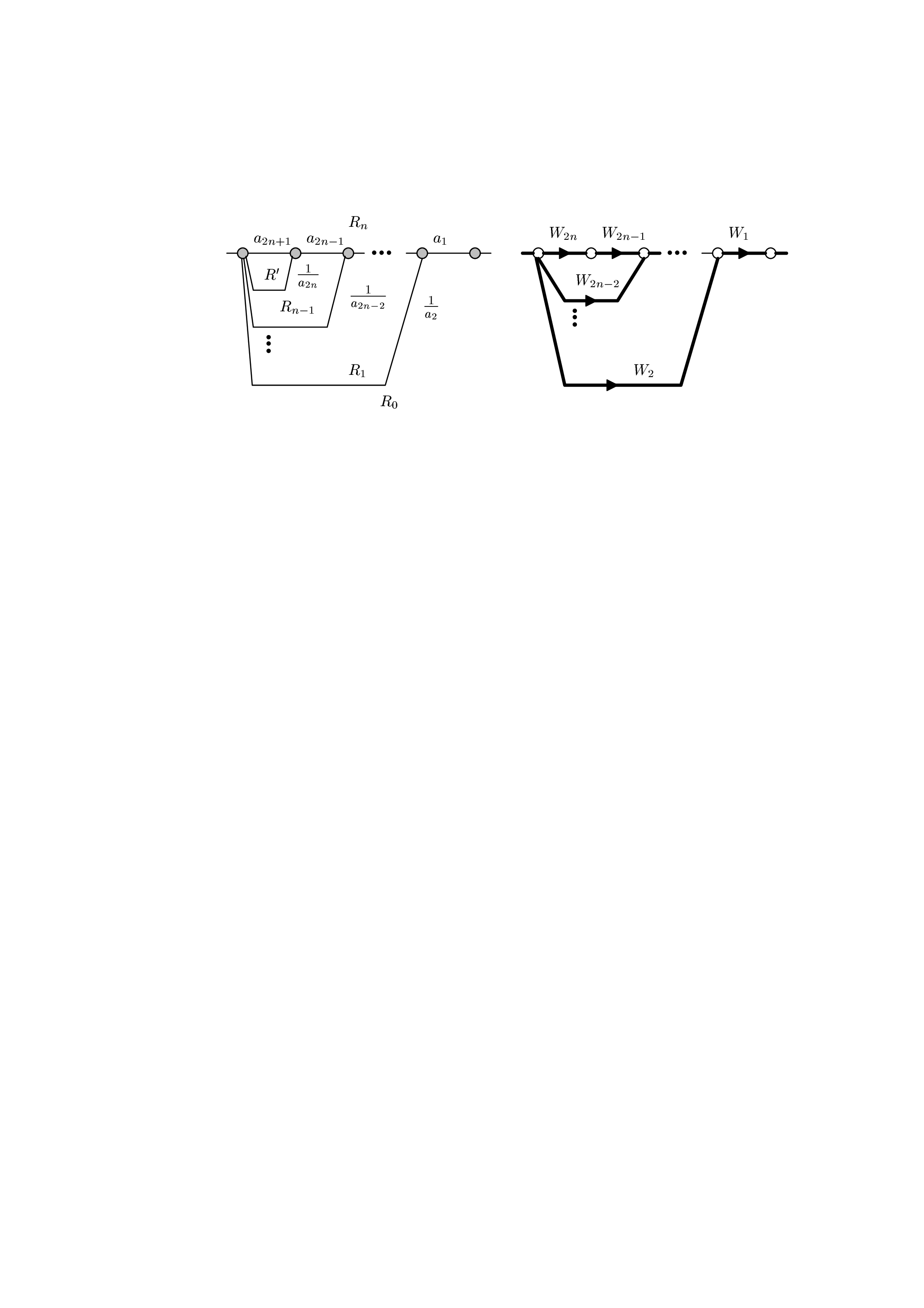}
 \end{center}
 \caption{Subgraphs derived from the rational tangle $Q(q\slash p)$}
 \label{fig:rationalgraph}
\end{figure}

The set of reduced local edge relations the global cycle relations derived from the regions and edges in the subgraph are give as follows:
\[ \left\langle 
\begin{array}{c}
W_{1},\ldots ,W_{2n}\\
R_{0},\ldots ,R_{n},R'
\end{array}
\left| 
\begin{array}{ll}
W_{2i}^{a_{2i}}=(R_{i}^{-1}R_{i-1}) & 1 \leq i \leq n-1 \\
W_{2i+1} = (R_{k}^{-1}R_{i})^{a_{2i+1}} & 0 \leq i \leq n-1\\
W_{2n}^{a_{2n}}=(R'^{-1}R_{n-1}) & W_{2n}=(R_{n}^{-1}R')^{a_{2n+1}} \\
W_{2i}=W_{2i+1}W_{2i+2} & 1\leq i \leq n-1
\end{array}
\right.\right\rangle
\]

\begin{lem}
\label{lem:rt}
For $1\leq i \leq n$, let $q_{i} \slash p_{i}$ be a rational number defined by the continued fraction
\[ \frac{q_{i}}{p_{i}} = \cfrac{1}{a_{2i}+ \cfrac{1}{a_{2i+1} + \cdots}}. \]  
Then $W_{2i}$ commutes with $R_{n}^{-1}R_{i-1}$, and $W_{2i}^{p_{i}} = (R_{n}^{-1}R_{i-1})^{q_{i}}$.
\end{lem}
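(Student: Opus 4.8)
The plan is to prove the lemma by \emph{downward} induction on $i$, from $i=n$ down to $i=1$. In each case the mechanism is the same: first check that the elements in sight generate an abelian subgroup of $\pi(D)$, and then read off the exponent identity as a bookkeeping computation inside that abelian group. The bookkeeping is governed by the standard convergent recursion for continued fractions: if $\frac{q_{i+1}}{p_{i+1}}$ denotes the continued fraction $\cfrac{1}{a_{2i+2}+\cfrac{1}{a_{2i+3}+\cdots}}$, then unwinding $\frac{q_i}{p_i}=\cfrac{1}{a_{2i}+\cfrac{1}{a_{2i+1}+q_{i+1}/p_{i+1}}}$ gives the integer identities $q_i=a_{2i+1}p_{i+1}+q_{i+1}$ and $p_i=a_{2i}q_i+p_{i+1}$ (with $p_i,q_i$ coprime, as usual), and these are exactly what the group computation mirrors.

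For the base case $i=n$ I would use the two relations $W_{2n}=(R_n^{-1}R')^{a_{2n+1}}$ and $W_{2n}^{a_{2n}}=R'^{-1}R_{n-1}$. Putting $t=R_n^{-1}R'$, the first says $W_{2n}=t^{a_{2n+1}}$, hence $R'^{-1}R_{n-1}=t^{a_{2n}a_{2n+1}}$ and $R_n^{-1}R_{n-1}=t\cdot(R'^{-1}R_{n-1})=t^{1+a_{2n}a_{2n+1}}$; since all of these are powers of $t$ they commute, which gives the commutativity claim, and since here $q_n=a_{2n+1}$ and $p_n=1+a_{2n}a_{2n+1}$ one gets $(R_n^{-1}R_{n-1})^{q_n}=t^{a_{2n+1}(1+a_{2n}a_{2n+1})}=(t^{a_{2n+1}})^{p_n}=W_{2n}^{p_n}$.

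For the inductive step ($1\le i\le n-1$, assuming the statement for $i+1$), put $t=R_n^{-1}R_i$; this is precisely the element with which $W_{2i+2}$ interacts according to the inductive hypothesis ($W_{2i+2}$ commutes with $t$ and $W_{2i+2}^{p_{i+1}}=t^{q_{i+1}}$). I would then proceed in three moves. First, commutativity: $W_{2i+1}=(R_n^{-1}R_i)^{a_{2i+1}}=t^{a_{2i+1}}$ is a power of $t$, so $t$, $W_{2i+1}$, $W_{2i+2}$ pairwise commute; therefore $W_{2i}=W_{2i+1}W_{2i+2}$ commutes with each of them, hence with $R_i^{-1}R_{i-1}=W_{2i}^{a_{2i}}$, hence with $R_n^{-1}R_{i-1}=t\cdot W_{2i}^{a_{2i}}$. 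Second, raise $W_{2i}=t^{a_{2i+1}}W_{2i+2}$ to the power $p_{i+1}$ and use the hypothesis $W_{2i+2}^{p_{i+1}}=t^{q_{i+1}}$ to obtain $W_{2i}^{p_{i+1}}=t^{a_{2i+1}p_{i+1}+q_{i+1}}=t^{q_i}$. Third, combine using $p_i=a_{2i}q_i+p_{i+1}$: $W_{2i}^{p_i}=W_{2i}^{a_{2i}q_i}\,W_{2i}^{p_{i+1}}=(R_i^{-1}R_{i-1})^{q_i}\,t^{q_i}=(R_n^{-1}R_{i-1})^{q_i}$, the last step using that $t$ commutes with $R_i^{-1}R_{i-1}$.

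I do not expect a deep obstacle: the lemma is a careful unwinding of the reduced Brunner relations restricted to a rational tangle. The one delicate point is the base case, where the single generator $W_{2n}$ carries the data of \emph{two} continued-fraction entries ($a_{2n}$ and $a_{2n+1}$) via the auxiliary region $R'$, so it does not literally fit the pattern of the inductive step and must be handled by hand; beyond that, the only thing requiring care is keeping the continued-fraction recursion ($q_i=a_{2i+1}p_{i+1}+q_{i+1}$, $p_i=a_{2i}q_i+p_{i+1}$) aligned with the indexing in the statement.
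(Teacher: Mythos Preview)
Your proposal is correct and follows essentially the same approach as the paper: a downward induction on $i$, with the base case handled by observing that $W_{2n}$, $R_n^{-1}R'$, and $R'^{-1}R_{n-1}$ all lie in a cyclic subgroup, and the inductive step handled by first establishing that $W_{2i}$, $R_n^{-1}R_i$, and $R_i^{-1}R_{i-1}$ pairwise commute and then reading off the exponent relation via the continued-fraction recursion $q_i=a_{2i+1}p_{i+1}+q_{i+1}$, $p_i=a_{2i}q_i+p_{i+1}$. Your write-up is in fact a bit more explicit than the paper's about the abelian-subgroup structure and the final combination step $W_{2i}^{p_i}=(R_i^{-1}R_{i-1})^{q_i}t^{q_i}=(R_n^{-1}R_{i-1})^{q_i}$.
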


\begin{proof}
We prove lemma by induction on $i$.

First we consider the case $i=n$.
By local edge relations $W_{2n}^{a_{2n}}=(R'^{-1}R_{n-1})$ and $W_{2n}=(R_{n}^{-1}R')^{a_{2n+1}}$, 
$W_{2n}$ commutes with both $(R'^{-1}R_{n-1})$ and $(R_{n}^{-1}R')$.
Thus $W_{2n}$ commutes with $(R_{n}^{-1}R_{n-1})= (R_{n}^{-1}R')(R'^{-1}R_{n-1})$.
Moreover, $(R'^{-1}R_{n-1})$ commutes with $(R_{n}^{-1}R')$. By definition, $p_{n}=a_{2n+1}a_{2n}+1$ and $q=a_{2n+1}$, therefore we conclude
\[ W_{2n}^{p_{n}}= W_{2n}^{a_{2n} \cdot a_{2n+1}} W_{2n} = (R_{n}^{-1}R')^{a_{2n+1}}(R'^{-1}R_{n-1})^{a_{2n+1}} = (R_{n}^{-1}R_{n-1})^{q_{n}}.\]

Assume that we have proved the Lemma for $>i$.
By local edge relations, $W_{2i}$ commutes with $(R_{i}^{-1}R_{i-1})$ and $W_{2i+1}$ commutes with $(A_{n}^{-1}A_{i})$.
By inductive hypothesis, $W_{2i+2}$ commutes with $(R_{n}^{-1}R_{i})$.
Since $W_{2i}=W_{2i+1}W_{2i+2}$, $W_{2i}$ commutes with $R_{n}^{-1}R_{i}$ and $(R_{n}^{-1}R_{i-1})=(R_{n}^{-1}R_{i})(R_{i}^{-1}R_{i-1})$.

In particular, $W_{2i}$ commutes with $W_{2i+1}=(R_{n}^{-1}R_{i})^{a_{2i+1}}$.
Thus
\[ W_{2i}^{p_{i+1}} = W_{2i+2}^{p_{i+1}} W_{2i+1}^{p_{i+1}} = (R_{n}^{-1}R_{i})^{q_{i+1} + a_{2i+1}p_{i+1}}.\]
so we conclude
\[
 W_{2i}^{p_{i}} = W_{2i}^{p_{i+1} + a_{2i}(q_{i+1} + a_{2i+1}p_{i+1})} = (R_{n}^{-1}R_{i})^{q_{i+1} + a_{2i+1}p_{i+1}} =  (R_{n}^{-1}R_{i})^{q_{i}}.
\]
\end{proof}

Lemma \ref{lem:rt} shows the following properties of $W_{A}$ for an algebraic tangle $A$.

\begin{prop}
\label{prop:rational}
Let $A$ be an algebraic tangle in a link diagram $D$ which is compatible with the checker board coloring, and let $R_{l}=R_{l}(A)$ and $R_{r}=R_{r}(A)$ be the left- and right- adjacent regions of $A$.
Then as an element of $\pi(D)$, the tangle element $W_{A}$ has the following properties. 
\begin{enumerate}
\item If $A$ is a rational tangle $Q(\frac{q}{p})$, then $W_{A}^{p}=(R_{l}^{-1}R_{r})^{q}$.
\item $W_{A}$ commutes with $(R_{l}^{-1}R_{r})$.
\end{enumerate}
\end{prop}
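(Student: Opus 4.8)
The plan is to deduce both statements from Lemma~\ref{lem:rt}: part~(1) and part~(2) in the rational case come essentially for free, and part~(2) for a general algebraic tangle follows by an induction along the way the tangle is assembled from elementary pieces.

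\emph{The rational case.} Keeping the notation of Lemma~\ref{lem:rt}, so that $A = Q(q/p)$ with $R_n = R_l(A)$, $R_0 = R_r(A)$ and $W_A = W_1 W_3 \cdots W_{2n-1}W_{2n}$, the first step is to collapse $W_A$ to a product of two factors: using the relations $W_{2i} = W_{2i+1}W_{2i+2}$ for $i = n-1, n-2, \ldots, 1$ rewrites the part of $W_A$ after $W_1$ as $W_2$, so $W_A = W_1 W_2$ in $\pi(D)$. The local edge relation for $W_1$ appearing in Lemma~\ref{lem:rt} gives $W_1 = (R_n^{-1}R_0)^{a_1} = (R_l^{-1}R_r)^{a_1}$, and Lemma~\ref{lem:rt} with $i = 1$ gives that $W_2$ commutes with $R_l^{-1}R_r$ and $W_2^{p_1} = (R_l^{-1}R_r)^{q_1}$, in the notation of that lemma. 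Since $W_1$ is a power of $R_l^{-1}R_r$ it commutes with $W_2$, and as both factors of $W_A = W_1 W_2$ commute with $R_l^{-1}R_r$ this already yields~(2) for a rational tangle. For~(1) I would use the identity $q/p = a_1 + q_1/p_1$, immediate from the definitions, which (all fractions being in lowest terms) forces $p = p_1$ and $q = a_1 p_1 + q_1$, so that
\[ W_A^{p} = W_A^{p_1} = W_1^{p_1}W_2^{p_1} = (R_l^{-1}R_r)^{a_1 p_1}(R_l^{-1}R_r)^{q_1} = (R_l^{-1}R_r)^{q}. \]

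\emph{The general algebraic case of~(2).} Here I would induct on the number of tangle-sum and tangle-product operations needed to build $A$ out of elementary tangles, the base case being elementary (hence rational) tangles. For the inductive step write $A = P \star Q$ with $\star \in \{+,*\}$ and $P, Q$ algebraic tangles built with fewer operations. If $A = P + Q$, the north and south regions of $A$ coincide with those of $P$ and of $Q$, so $R_l(A) = R_l(P) = R_l(Q)$ and $R_r(A) = R_r(P) = R_r(Q)$, while an edge-path realizing $W_A$ may be chosen to pass through the vertex at which the east region of $P$ meets the west region of $Q$, giving $W_A = W_Q W_P$ in $\pi(D)$; both $W_P$ and $W_Q$ commute with $R_l(A)^{-1}R_r(A)$ by induction, hence so does $W_A$. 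If $A = P * Q$, the west regions of $P$ and $Q$ get identified, and so do their east regions, so $W_A$ may be computed by an edge-path lying entirely in the part of the connectivity graph coming from $P$, and also by one lying entirely in the part coming from $Q$; since $W_A$ does not depend on the path (global cycle relations), $W_A = W_P = W_Q$ in $\pi(D)$. Meanwhile the product recombines the remaining adjacent regions so that $R_r(P) = R_l(Q)$, $R_l(A) = R_l(P)$ and $R_r(A) = R_r(Q)$; hence $R_l(A)^{-1}R_r(A) = (R_l(P)^{-1}R_r(P))(R_l(Q)^{-1}R_r(Q))$, and since $W_A = W_P$ commutes with the first factor and $W_A = W_Q$ with the second, $W_A$ commutes with $R_l(A)^{-1}R_r(A)$.

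The step I expect to be the main obstacle is the planar bookkeeping of regions under $+$ and $*$: one must check that a tangle sum preserves the north and south regions as single regions of $D$ and identifies the east region of the first factor with the west region of the second, that a tangle product identifies the west (resp.\ east) region of one factor with that of the other and fuses the two inner adjacent regions into one, and that the edge orientations fixed on the connectivity graph of $A$ restrict to the ones used for $P$ and $Q$. Once this is in place, the identities $W_{P+Q} = W_Q W_P$, $W_{P*Q} = W_P = W_Q$ and the stated factorization of $R_l(A)^{-1}R_r(A)$ follow from the global cycle relations and everything else is formal.
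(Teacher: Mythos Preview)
Your proof is correct and follows essentially the same approach as the paper: part~(1) is deduced from Lemma~\ref{lem:rt} (the paper simply declares this ``obvious''; your reduction $W_A = W_1W_2$ and the continued-fraction identity $q/p = a_1 + q_1/p_1$ make this explicit), and part~(2) is an induction along $+$ and $*$ with exactly the same region identifications and use of the global cycle relations. The only cosmetic difference is that the paper inducts on the number of rational tangles, peeling off one rational factor at a time, whereas you induct on the number of operations and allow both factors to be algebraic; the case analysis is identical either way.
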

\begin{proof}
(1) is obvious from Lemma \ref{lem:rt}.
Assume that $A$ is obtained from $n$ rational tangles by applying $+$ or $*$ operations.
We prove (2) by induction on $n$. The case $n=1$ is proved in Lemma \ref{lem:rt}.

Assume that $A=A' + Q$ where $A'$ is an algebraic tangle obtained from $n-1$ rational tangles, and $Q$ be a rational tangle. Then $R_{l}(A)=R_{l}(A')=R_{l}(Q)$ and $R_{r}(A)= R_{r}(A') = R_{r}(Q)$, hence by induction $A=A'+Q$ commute with $(R_{l}^{-1}R_{r})$. Similarly, assume that $A=A'*Q$. Then $R_{l}(A)=R_{l}(A')$, $R_{r}(A') = R_{l}(Q)$, and $R_{r}(Q) = R_{r}(A)$. By the global cycle relation, $W_{A}=W_{A'}=W_{Q}$. Hence $W_{A}$ commutes with $(R_{l}^{-1}R_{r}) =(R_{l}(A')^{-1}R_{r}(A'))(R_{l}(Q)^{-1}R_{r}(Q))$. 
\end{proof}

\subsection{Universal range}

In this section we present a notion of universal range. This is an inequality of group elements which is valid for all left orderings.

For a left ordering $<_{G}$ of $G$ and rational numbers $a=\frac{q}{p}, b=\frac{s}{r}$ such that $a<b$, let $[[a,b]]_{X,<_{G}}$ be a subset of $G$ defined by
\[ [[a,b]]_{X, <_{G}} = \left\{
\begin{array}{ll}
 \{ g \in G \: | \: X^{q} \leq_{G} g^{p},  g^{r} \leq_{G} X^{s}, Xg=gX \} & ( \textrm{if } X >_{G} 1) \\
 \{ g \in G \: | \: X^{s} \leq_{G} g^{r}, g^{p} \leq_{G} X^{p}, Xg=gX\} & ( \textrm{if } X <_{G} 1).
\end{array}
\right.
 \]
We also define
\begin{align*}
 [[ a , +\infty ]]_{X, <_{G}} = \bigcup_{b>a} [[a,b]]_{X, <_{G}}, \\
  [[ -\infty , b ]]_{X, <_{G}} = \bigcup_{b>a} [[a,b]]_{X, <_{G}}.
\end{align*}

If $g$ commutes with $X$, then $X^{mp} < g^{mq}$ if and only if $X^{p} < g^{q}$. Hence the subset $[[a,b]]_{X,<_{G}}$ does not depend on a choice of the representatives of rationals $a= \frac{p}{q}$ and $b= \frac{r}{s}$.
 
Now we define $[[a,b]]_{X}$ by 
\[ [[a,b]]_{X} = \bigcap_{<_{G} \in LO(G)} [[a,b]]_{X,<_{G}} \]
where $LO(G)$ denotes the set of all left orderings of $G$.

\begin{lem}
\label{lem:univ}
Let $X$ and $Y$ be non-trivial elements of $G$ and $a,b,c,d \in \Q \cup \{ \pm \infty \}$ be rational numbers such that $a\leq b$, $c \leq d$. 
\begin{enumerate}
\item  If $[a,b] \subset [c,d]$  as a subset of $\Q$, then $[[a,b]]_{X} \subset [[c,d]]_{X} $ as a subset of $G$. 
\item  Assume that $g \in [[a,b]]_{X}$ and $h \in [[c,d]]_{X}$.
If $gh=hg$ or $a,b,c,d \in \Z$, then $gh \in [[a+c,b+d]]_{X}$.
\item  Assume that $g \in [[a,b]]_{X}$ and $g \in [[c,d]]_{Y}$.
 If $XY=YX$ or $a,b,c,d \in 1 \slash \Z =\{ \frac{1}{n} \: | \: n \in \Z \cup \{\pm 0\} \}$, then $g \in [m,M]_{XY}$, where $m$ and $M$ are defined by
\[ \left\{\begin{array}{l}
m = \min \{  (a^{-1}+c^{-1})^{-1},(a^{-1}+d^{-1})^{-1},(b^{-1}+c^{-1})^{-1}, (b^{-1}+d^{-1})^{-1}\} \\
M = \max \{  (a^{-1}+c^{-1})^{-1},(a^{-1}+d^{-1})^{-1},(b^{-1}+c^{-1})^{-1}, (b^{-1}+d^{-1})^{-1}\}.
\end{array}\right.
\]
Here we regard $+\infty$ as $\frac{1}{+0}$ and $-\infty$ as $\frac{1}{-0}$.
\end{enumerate}
\end{lem}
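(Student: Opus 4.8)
The plan is to reduce everything to an elementary statement about commuting elements in a left-ordered group. For part (1), note that $[a,b]\subset[c,d]$ means $c\le a$ and $b\le d$ as rationals. Fix any left ordering $<_G$ of $G$ and suppose $g\in[[a,b]]_{X,<_G}$; in particular $Xg=gX$. I would treat the case $X>_G 1$ (the case $X<_G 1$ is symmetric, swapping the roles of the two defining inequalities). The key observation, already recorded in the remark before the lemma, is that because $g$ commutes with $X$, the inequality $X^q\le_G g^p$ is equivalent to $X^{q'}\le_G g^{p'}$ whenever $q/p=q'/p'$; so I may clear denominators and phrase the membership condition in the cyclic abelian group $\langle X,g\rangle$ purely in terms of the single real-valued ``slope'' of $g$ relative to $X$. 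Concretely, in a left-ordered group the subgroup generated by two commuting elements is bi-orderable abelian, hence embeds order-preservingly in $(\R,+)$ with $X\mapsto 1$; under this embedding $g$ maps to some real number $\sigma(g)$, and $g\in[[a,b]]_{X,<_G}$ is exactly the assertion $a\le\sigma(g)\le b$. From this reformulation (1) is immediate, and (2), (3) become statements about ordinary arithmetic of real numbers. The only subtlety is the case where $X$ and $g$ (resp. $g$ and the ``other'' generator in (3)) are \emph{not} assumed to commute; this is precisely why the hypotheses ``$a,b,c,d\in\Z$'' in (2) and ``$a,b,c,d\in 1/\Z$'' in (3) appear, and handling it is the main obstacle, discussed below.

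For part (2) in the commuting case $gh=hg$: one also needs $g,h$ each to commute with $X$, so $\langle X,g,h\rangle$ is abelian, embeds in $\R$ with $X\mapsto1$, and $\sigma(gh)=\sigma(g)+\sigma(h)$; since $\sigma(g)\in[a,b]$ and $\sigma(h)\in[c,d]$ we get $\sigma(gh)\in[a+c,b+d]$, and $gh$ commutes with $X$, so $gh\in[[a+c,b+d]]_{X,<_G}$; intersecting over all $<_G\in LO(G)$ gives the claim. For part (3) in the commuting case $XY=YX$: here $g$ commutes with both $X$ and $Y$ (by definition of the two memberships), and $X,Y$ commute, so $\langle X,Y,g\rangle$ is abelian and embeds in $\R$; writing $\sigma$ for the slope relative to this embedding with some fixed normalization, ``$g\in[[a,b]]_X$'' says $\sigma(g)/\sigma(X)\in[a,b]$ and ``$g\in[[c,d]]_Y$'' says $\sigma(g)/\sigma(Y)\in[c,d]$ — provided $\sigma(X),\sigma(Y)\neq0$, which holds since $X,Y$ are nontrivial and the embedding is injective on $\langle X,Y,g\rangle$ hence $\sigma$ is a genuine order-embedding. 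Then $\sigma(g)/\sigma(XY)=\sigma(g)/(\sigma(X)+\sigma(Y))$, and a routine computation of $1/(\text{slope}_X^{-1}+\text{slope}_Y^{-1})$ — taking care of signs, since $\sigma(X),\sigma(Y)$ may have opposite signs, which is exactly why $m$ and $M$ are a min and max over all four sign-combinations of the endpoints — shows this ratio lies in $[m,M]$; with $XY$ commuting with $g$ we conclude $g\in[[m,M]]_{XY,<_G}$, and intersecting over $LO(G)$ finishes it. I would also handle the $\pm\infty$ endpoints by the stated convention $+\infty=1/(+0)$, $-\infty=1/(-0)$, which makes the formulas for $m,M$ continuous; this is bookkeeping rather than mathematics.

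The hard part is the non-commuting cases, where we are told instead that the endpoints are integers (for (2)) or reciprocals of integers (for (3)). Here the subgroup in play need not be abelian, so the embedding-into-$\R$ trick fails, and one must argue directly with the left ordering. For (2) with $a,b,c,d\in\Z$: the definition then reads (say $X>_G1$) $X^a\le_G g\le_G X^b$ and $X^c\le_G h\le_G X^d$, with $g,h$ each commuting with $X$. Multiplying inequalities in a left-ordered group is delicate — $g\le_G g'$ and $h\le_G h'$ do \emph{not} in general give $gh\le_G g'h'$ — but one can push $g$ past powers of $X$ using $gX=Xg$: from $X^c\le_G h$ we get $gX^c\le_G gh$, i.e. $X^cg\le_G gh$, and then $X^{a+c}=X^cX^a\le_G X^cg\le_G gh$; similarly $gh\le_G X^{b+d}$; and $gh$ commutes with $X$. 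So integrality is what lets us slide the noncommuting factor across powers of the \emph{central-on-this-pair} element $X$. For (3) with $a,b,c,d\in 1/\Z$, write $a=1/k$ etc.; the membership ``$g\in[[1/k,\cdot]]_X$'' unwinds to a comparison between $g^k$ and $X$ (a power \emph{of $g$}, not of $X$), and one combines the two memberships by a parallel sliding argument now using that $g$ commutes with both $X$ and $Y$ to move powers of $g$ across $X,Y$; the reciprocal-integer hypothesis is exactly what keeps these manipulations inside honest group relations. I expect the sign analysis in (3) — keeping straight which of $X>1$, $X<1$, $Y>1$, $Y<1$ one is in, and checking that the four-fold min/max in the statement correctly covers every case including when $\sigma(X)$ and $\sigma(Y)$ nearly cancel — to be the most error-prone bookkeeping, so I would organize it as a short case table rather than prose.
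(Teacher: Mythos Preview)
Your direct inequality-chasing for the non-commuting cases (integers in (2), reciprocal integers in (3)) is correct and is essentially the paper's argument. The gap is in your handling of the \emph{commuting} cases: the assertion that a left-ordered abelian group embeds order-preservingly into $(\R,+)$ is false. H\"older's theorem gives such an embedding only when the order is \emph{Archimedean}, and nothing in the hypotheses forces the restriction of $<_G$ to $\langle X,g,h\rangle$ or $\langle X,Y,g\rangle$ to be Archimedean. For a concrete failure take $G=\Z^{2}$ with the lexicographic order, $X=(1,0)$, $g=(0,1)$: then $g>_G 1$ yet $g^{n}<_G X$ for every $n>0$, so no order-embedding into $\R$ with $X\mapsto 1$ can exist and your ``slope'' $\sigma(g)$ is undefined. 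Your later appeal to injectivity (``$\sigma(X),\sigma(Y)\ne 0$ since $X,Y$ are nontrivial and the embedding is injective'') therefore also collapses, and with it the computation $\sigma(g)/\sigma(XY)=\sigma(g)/(\sigma(X)+\sigma(Y))$ in (3).

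The repair is immediate and is exactly what the paper does: the very sliding argument you wrote out for the integer case covers the commuting case as well. After clearing denominators---write $a=q/p$, $c=s/r$ and use $gX=Xg$, $hX=Xh$ to pass from $X^{q}\le_G g^{p}$, $X^{s}\le_G h^{r}$ to $X^{qr}\le_G g^{pr}$, $X^{ps}\le_G h^{pr}$---the hypothesis $gh=hg$ gives $(gh)^{pr}=g^{pr}h^{pr}$; left-multiplying the second inequality by $g^{pr}$, commuting $g^{pr}$ past $X^{ps}$, and then applying the first inequality yields $X^{ps+qr}\le_G (gh)^{pr}$, i.e.\ the lower bound $a+c$. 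The integer hypothesis is simply the special case $p=r=1$, where $(gh)^{1}=g\cdot h$ needs no commutativity of $g$ with $h$. Part (3) is parallel with the roles of base and exponent swapped, using that $g$ commutes with both $X$ and $Y$. So rather than two separate mechanisms (an embedding for the commuting case, sliding for the integral case), the paper runs one direct manipulation that dispatches both hypotheses in a single line.
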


\begin{proof}
(1) is obvious from the definition.
We prove (2) and (3). Let us put $a= p \slash q$ and $c= r \slash s$, where $p,q,r,s \in \Z$.
 Let $<_{G}$ be a left ordering of $G$. We only consider the case that $1<_{G} X$ and $1<_{G}Y$. We show the lower bounds $X^{ps+qr} \leq_{G} (gh)^{pr}$ and  $(XY)^{sq} \leq_{G} g^{sp +qr}$ respectively. The other cases and the upper bounds are proved in a similar way.
 
By assumption, $X^{q} \leq_{G} g^{p}$ and $X^{s} \leq_{G} h^{r}$. Since $X$ commutes with both $g$ and $h$, 
$X^{rq}  \leq_{G} g^{rp}$ and $X^{ps}  \leq_{G} h^{pr}$.  If $g$ and $h$ commutes or both $a$ and $c$ are integers, then $X^{ps+qr}  \leq_{G} (gh)^{pr}$. 
Similarly, since $g$ commutes with both $X$ and $Y$, the assumption $X^{q}  \leq_{G} g^{p}$ and $Y^{s}  \leq_{G} g^{r}$ implies $X^{qs}  \leq_{G} g^{sp}$ and $Y^{sq}  \leq_{G}g^{qr}$.
If $X$ and $Y$ commutes or both $a, c \in 1 \slash \Z$, then $(XY)^{sq}  \leq_{G} g^{sp+qr}$. 
\end{proof}

\begin{defn}
Let $D$ be a link diagram representing a link $L$ and $A$ be an algebraic tangle in $D$ which is compatible with the checkerboard coloring. Let $R_{l}=R_{l}(A),R_{r}=R_{r}(A)$ be the left- and the right- adjacent regions of $A$ and $W_{A}$ be the tangle element, defined in Section 3.2.
If $W_{A} \in [[m,n]]_{R_{l}^{-1}R_{r}}$, we say the interval $[[m,n]]$ is a {\em universal range} of $A$, and denote by $A \in [[m,n]]$.
\end{defn}

If $[[m,n]]$ is a universal range of $A$ and $m'<m$, $n<n'$, then $[[m',n']]$ is also a universal range of $A$.
In general it is difficult to determine the optimal universal range, the smallest universal range which will depends on not only $A$ but also the global diagram $D$.
However, we can easily obtain non-trivial universal range of $A$ by using only the structure of $A$ itself.

First observe that if $A$ is a rational tangle $Q(\frac{q}{p})$, then by Proposition \ref{prop:rational},$Q(\frac{q}{p}) \in [[\frac{q}{p},\frac{q}{p}]]$.
The universal range for general algebraic tangles can be obtained from the next Proposition.

\begin{prop}
\label{prop:algebraic}
Let $D$ be a link diagram and $A$, $A_{1}$ and $A_{2}$ be algebraic tangles, which are sub-diagram of $D$ and compatible to the checker board coloring.  Assume that $A_{1} \in [[a,b]]$, and $A_{2} \in [[c,d]]$, where $a,b,c,d \in \Q \cup \{ \pm \infty\}$.
\begin{enumerate}
\item  Assume that $A=A_{1}+A_{2}$. 
If $a,b,c,d \in \Z$, or $A_{1}$ and $A_{2}$ commute, then $A \in[[a+c,b+d]]$.
\item  Assume that $A=A_{1}\,*\, A_{2}$. 
If $a,b,c,d \in 1 \slash \Z \cup \{\pm 0\}$, or $R_{l}(A_{1})^{-1}R_{r}(A_{1})$ and $R_{l}(A_{2})^{-1}R_{r}(A_{2})$ commute, then $A \in [[m,M]]$, where 
\[ \left\{\begin{array}{l}
m = \min \{  (a^{-1}+c^{-1})^{-1},(a^{-1}+d^{-1})^{-1},(b^{-1}+c^{-1})^{-1}, (b^{-1}+d^{-1})^{-1}\} \\
M=\max \{ (a^{-1}+c^{-1})^{-1},(a^{-1}+d^{-1})^{-1},(b^{-1}+c^{-1})^{-1}, (b^{-1}+d^{-1})^{-1}\}
\end{array}
\right.
\]
Here we regard $+\infty$ as $\frac{1}{+0}$ and $-\infty$ as $\frac{1}{-0}$.

\end{enumerate}
\end{prop}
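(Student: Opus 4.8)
The plan is to reduce Proposition~\ref{prop:algebraic} to the abstract facts about universal ranges already established in Lemma~\ref{lem:univ}, by identifying the right group elements and checking the hypotheses translate correctly. Set $G=\pi(D)$, which is non-trivial by Lemma~\ref{lem:non-trivial}, and write $X_i = R_l(A_i)^{-1}R_r(A_i)$ for $i=1,2$ and $X = R_l(A)^{-1}R_r(A)$. By hypothesis $W_{A_1}\in[[a,b]]_{X_1}$ and $W_{A_2}\in[[c,d]]_{X_2}$. The content of the proposition is that the universal-range arithmetic for tangle sum matches the ``addition'' rule of Lemma~\ref{lem:univ}(2) and that for tangle product matches the ``parallel'' rule of Lemma~\ref{lem:univ}(3). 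So the real work is geometric/combinatorial: pin down how $W_A$, $X$ relate to $W_{A_i}$, $X_i$ in each of the two cases, using the structure of the reduced decomposition graph and connectivity graph and the global cycle relations.

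First I would treat case (1), $A = A_1 + A_2$. Here the key observation — which is exactly the one used in the proof of Proposition~\ref{prop:rational} for the ``$+$'' step — is that the left- and right-adjacent regions are shared: $R_l(A) = R_l(A_1) = R_l(A_2)$ and $R_r(A) = R_r(A_1) = R_r(A_2)$, so $X = X_1 = X_2$. On the connectivity-graph side, an edge-path realizing $W_A$ is (up to global cycle relations) the concatenation of an edge-path through $A_1$ and an edge-path through $A_2$, so $W_A = W_{A_1} W_{A_2}$ in $\pi(D)$. By Proposition~\ref{prop:rational}(2), each $W_{A_i}$ commutes with $X_i = X$. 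Now apply Lemma~\ref{lem:univ}(2) with $g = W_{A_1}$, $h = W_{A_2}$, same $X$: if $a,b,c,d\in\Z$ the integrality clause applies, and if $A_1$ and $A_2$ commute (meaning $W_{A_1}W_{A_2} = W_{A_2}W_{A_1}$) the commuting clause applies; either way $W_A = W_{A_1}W_{A_2} \in [[a+c,b+d]]_X$, which is the claim.

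Next, case (2), $A = A_1 * A_2$. Following the ``$*$'' step in Proposition~\ref{prop:rational}, the adjacency pattern is now: $R_l(A) = R_l(A_1)$, $R_r(A_1) = R_l(A_2)$, and $R_r(A_2) = R_r(A)$. Hence $X = X_1 X_2$ as a product of region-generator differences. On the connectivity-graph side the global cycle relation forces $W_A = W_{A_1} = W_{A_2}$ (a single edge-path threads both tangles), so the \emph{same} element $g := W_A$ lies in $[[a,b]]_{X_1}$ and in $[[c,d]]_{X_2}$. Moreover $g$ commutes with $X_1$ and with $X_2$ by Proposition~\ref{prop:rational}(2). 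This is precisely the setup of Lemma~\ref{lem:univ}(3) with $X = X_1$, $Y = X_2$, $XY = X$: if $a,b,c,d \in 1/\Z\cup\{\pm0\}$ the integrality clause of that lemma applies, and if $X_1$ and $X_2$ commute the commuting clause applies; the conclusion $g \in [[m,M]]_{X_1X_2} = [[m,M]]_X$ with the stated $\min$/$\max$ formula is exactly what we want.

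The main obstacle I expect is not the algebra — that is handed to us by Lemma~\ref{lem:univ} — but verifying carefully the two geometric identifications: that under tangle sum the edge-path for $W_A$ factors as $W_{A_1}W_{A_2}$ with shared adjacent regions, and that under tangle product it coincides with each $W_{A_i}$ with $X$ factoring as $X_1X_2$. These require reading off the subgraphs of $\red(G)$ and $\widetilde G$ associated to $A_1 + A_2$ and $A_1 * A_2$ from the definitions of the $+$ and $*$ operations (Figure~\ref{fig:tangle}) and the checkerboard-compatibility conventions, and checking the orientations of edge-paths match so that no stray inverses appear. Once those are in hand, one must also confirm the side conditions are preserved: for the ``$+$'' case the integrality hypothesis $a,b,c,d\in\Z$ is exactly Lemma~\ref{lem:univ}(2)'s hypothesis, and for the ``$*$'' case the hypothesis $a,b,c,d\in 1/\Z\cup\{\pm0\}$ is exactly Lemma~\ref{lem:univ}(3)'s; and ``$A_1$ and $A_2$ commute'' should be interpreted as $W_{A_1}$ commuting with $W_{A_2}$, while for the product case the relevant commutation is between $X_1$ and $X_2$, matching the two lemma clauses. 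With these checks done the proof is just ``apply Lemma~\ref{lem:univ}''.
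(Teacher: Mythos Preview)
Your proposal is correct and follows essentially the same approach as the paper, whose proof is the single line ``These assertions follow by Proposition~\ref{prop:rational} and Lemma~\ref{lem:univ}.'' You have simply unpacked that sentence: the region identifications and the equality $W_A=W_{A_1}=W_{A_2}$ in the $*$ case are exactly those recorded in the proof of Proposition~\ref{prop:rational}, and the arithmetic on universal ranges is Lemma~\ref{lem:univ}(2) and (3) applied verbatim.
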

\begin{proof}
These assertions follow by Proposition \ref{prop:rational} and Lemma \ref{lem:univ}.  
\end{proof}

We here remark that the universal range obtained from Proposition \ref{prop:algebraic} is far from optimal. In most cases, a non-optimal universal range computed by Proposition \ref{prop:algebraic} is sufficient to apply our Theorems described in next section. With some additional assumptions and more careful arguments, we will often be able to  get better universal range. We give an example how to get better universal range.

\begin{exam}
\label{exam:better}
Let us consider the algebraic tangle $A = A_{1} + A_{2} = Q(\frac{1}{3}) + Q(\frac{1}{4})$. According to Proposition \ref{prop:algebraic}, one can directly get $A \in [[0,2]]$, since $Q(\frac{1}{3}) \in [[0,1]]$ and $Q(\frac{1}{4}) \in 
[[0,1]]$. 
However, in this case we can get better universal range as follows. 

Let $X = (R_{l}(A)^{-1}R_{r}(A))$,  $W_{1}=W_{A_{1}}$ and $W_{2}=W_{A_{2}}$.
Then $W_{1}^{3} = W_{2}^{4} = X$. 
Let $<$ be a left-ordering. We consider the case $1<X$. The case $1> X $ is treated in a similar way. Now
\[ W_{A}^{3}=(W_{1}W_{2})^{3} = W_{1}^{3}(W_{1}^{-2}W_{2}W_{1}^{2}) (W_{1}^{-1}W_{2}W_{1})W_{2} \]
Since $(W_{1}^{-i}W_{2}W_{1}^{i})^{4} = X$, $W_{1}^{-i}W_{2}W_{1}^{i}>1$ holds for all $i$. Thus, we get
\[ W_{A}^{3} = (W_{1}W_{2})^{3} > W_{1}^{3} = X. \]
Similarly, we have
\[ W_{A}^{4}= (W_{1}W_{2})^{4} = W_{1} W_{2}^{4} (W_{2}^{-3} W_{1}W_{2}^{3}) (W_{2}^{-2} W_{1}W_{2}^{2}) (W_{2}^{-1}W_{1}W_{2}) \]
Since $(W_{2}^{-i}W_{1}W_{2}^{i})^{3} = X$, $W_{2}^{-i}W_{1}W_{2}^{i} < X $ for all $i$. Thus, we get
\[  W_{A}^{4}= (W_{1}W_{2})^{4}  < X^{5}. \]

Thus, we get a better universal range $ A \in [[\frac{1}{3},\frac{5}{4}]]$.
\end{exam}

By Proposition \ref{prop:algebraic}, if $A$ is an alternating algebraic tangle, then $A \in [[0,+\infty]]$ or $A \in [[-\infty,0]]$. The converse is not true.

\begin{exam}
\label{exam:univrange}

Let $A= [((Q(\frac{1}{3}) +Q( \frac{1}{4}))*Q(-1)] + Q(2)$ be a non-alternating algebraic tangle.
As we have seen in Example \ref{exam:better}, $(Q(\frac{1}{3}) +Q( \frac{1}{4})) \in [[\frac{1}{3},\frac{5}{4}]] \subset [[\frac{1}{3}, +\infty = \frac{1}{+0}]]$.
Thus, by Proposition \ref{prop:algebraic}, $((Q(\frac{1}{3}) +Q( \frac{1}{4}))*Q(-1) + Q(2) \in [[1,\frac{5}{2}]]$.
\end{exam}

\subsection{Coarse Brunner's presentation}

Now we are ready to give the coarse Brunner's presentation.

Let $D$ be a link diagram representing $L$. We consider a decomposition of $D$ as a union of algebraic tangles and strands so that all crossings of $D$ are contained in some tangle parts, and that each algebraic tangle is compatible with the checker board colorings. 
We say such a decomposition of link diagram a {\em tangle-strand decomposition}.

A tangle-strand decomposition of $D$ defines the decomposition of the checker board surface of $D$ as discs and subsurfaces corresponding to tangles. Then we construct the oriented labeled graph $\Gamma$, which we call the {\em coarse decomposition graph} as follows.
The vertex of $\Gamma$ is a disc part of the tangle-strand decomposition. For each algebraic tangle $A$, we assign an oriented edge of $\Gamma$ having the label $A$ as in Figure \ref{fig:coarse}. 
We remark that if we choose all tangles $A$ as integer tangles, then $\Gamma$ is nothing but a decomposition graph, and if we choose all tangles $A$ as integer tangles or the rational tangles of the form $Q(\pm \frac{1}{n})$, then $\Gamma$ is a reduced decomposition graph. Thus the coarse decomposition graph $\Gamma$ is a generalization of the (reduced) decomposition graph.

\begin{figure}[htbp]
 \begin{center}
\includegraphics[width=70mm]{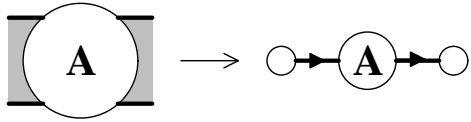}
 \end{center}
 \caption{The coarse decomposition graph $\Gamma$}
 \label{fig:coarse}
\end{figure}

Using the coarse decomposition graph, we define the coarse Brunner's presentation.

\begin{defn}
The coarse Brunner's presentation $\CB$ for a tangle-strand decomposition of a link diagram $D$ is a set of generators and relations given as follows.\\

{\bf [Generators]}
\begin{description}
\item[Tangle generator] $\{W_{A}\}$, the set of edges of $\Gamma$.
\item[Region generator] $\{R_{i}\}$, the set of connected components of $\R^{2}-\Gamma$.
\end{description}

{\bf [Relations]}
\begin{description}
\item[Local Coarse relation] $W_{A} \in [[m_{A}, M_{A}]]_{(R_{l}^{-1}R_{r})}$, where $R_{l}=R_{l}(A)$ and $R_{r}=R_{r}(A)$ are the left- and right- adjacent regions of $A$ and $[[m_{A},M_{A}]]$ is the universal range of $A$. 

\item[Global cycle relation] $W_{n}^{\pm1} \cdots W_{1}^{\pm 1}=1$ if the edge-path $W_{n}^{\pm 1}\cdots W_{1}^{\pm 1}$ forms a loop in $\R^{2}$. Here $W_{i}^{-1}$ represents the path $W_{i}$ with the opposite orientation.
\item[Vanishing relation] $R_{0}=1$ where $R_{0}$ corresponds to the unbounded region.
\end{description}

\end{defn}

Let us compare Brunner's presentation of  $\pi_{1}(\Sigma_{2}(L))$ with the coarse Brunner's presentation $\CB$. Recall that each tangle element $W_{A}$ is regarded as an element of $\pi_{1}(\Sigma_{2}(L))$.
Conversely, some region generators of $\pi_{1}(\Sigma_{2}(L))$ in Brunner's presentation are naturally regarded as region generators of $\CB$. 

The local coarse relation $W_{A} \in [[m_{A}, M_{A}]]_{(R_{l}^{-1}R_{r})}$ is regarded as two inequalities and the commutativity relation $W_{A}(R_{l}^{-1}R_{r})=(R_{l}^{-1}R_{r})W_{A}$.
As we have seen in the previous sections, the local coarse relations are consequences of relations of the reduced Brunner's presentation. Hence the local coarse relations are the consequence of Brunner's presentation of $\pi_{1}(\Sigma_{2}(L))$. Therefore, the inequalities and the commutative relation in the local coarse relation are valid as relations of elements in $\pi_{1}(\Sigma_{2}(L))$.

We close this section by giving an example of coarse Brunner's presentation.
\begin{exam}
\label{exam:coarse}
Let us consider a link diagram given in Figure \ref{fig:diagram} left, whose coarse decomposition graph $\Gamma$ is given in Figure \ref{fig:diagram} right.
For each algebraic tangle $A_{i}$, let $[[m_{i},M_{i}]]$ be the universal range of $A_{i}$. 

Thus, the coarse Brunner's presentation of $\pi_{1}(\Sigma_{2}(L))$ is given as follows:\\

\noindent 
 {\bf [Generators:]}
 \[  W_{1}, W_{2},\ldots, W_{6},A,B,C \]
{\bf [Local Coarse relations:]}
\[ 
\begin{array}{ll}
W_{1}\in [[m_{1},M_{1}]]_{A} & W_{2} \in [[m_{2},M_{2}]]_{B}\\
W_{3} \in [[m_{3},M_{3}]]_{C^{-1}} & W_{4}\in [[m_{4},M_{4}]]_{B^{-1}A}\\
W_{5}\in [[m_{5},M_{5}]]_{B^{-1}C} & W_{6}\in [[m_{6},M_{6}]]_{C^{-1}A}
\end{array}
\]
{\bf [Global cycle relations:]}
\[ 
W_{6}W_{4}W_{1}=1, \;\;\; W_{4}^{-1}W_{5}^{-1}W_{2}=1,\;\;\; W_{6}^{-1}W_{3}^{-1}W_{5}=1
\]
Here we put $W_{A_{i}}=W_{i}$, and we simplify the presentation by removing the trivial region generator.

\begin{figure}[htbp]
 \begin{center}
\includegraphics[width=80mm]{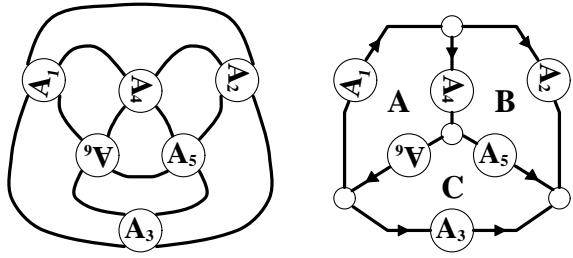}
 \end{center}
 \caption{Example of coarse Brunner's presentation}
 \label{fig:diagram}
\end{figure}

\end{exam}

\section{Non-left-orderable double branched covering}

In this section we use the coarse Brunner's presentation to show that the double branched covering of links represented by certain diagrams has the non-left-orderable fundamental group.
As we already mentioned, the reader might regard the proof of these ``Theorems" rather as examples of how to show non-left-orderability via coarse Brunner's presentations.
The arguments appearing in the following proofs are typical examples to deduce contradiction: We may apply similar argument to deduce contradictions for coarse Brunner's presentation for other link diagrams.

\subsection{Diagrams which are close to alternating diagrams}

The first example we treat is a link which is similar to alternating in the coarse presentation view point.
The proof of the next Theorem is inspired by an argument of Greene in \cite{gr}, and is regarded as an adaptation of Greene's argument for coarse Brunner's presentation. 
 
\begin{thm}
\label{thm:simalter}
Let $D$ be a link diagram which admits a tangle-strand decomposition such that $A_{i} \in [[0,\infty]]$ for all $i$, or $A_{i} \in [[-\infty,0]]$ for all $i$. Then $\pi_{1}(\Sigma_{2}(L))$ is not left-orderable.
\end{thm}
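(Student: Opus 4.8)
The plan is to assume that $\pi_1(\Sigma_2(L))$ admits a left ordering $<$ and derive a contradiction using the coarse Brunner's presentation attached to the given tangle-strand decomposition. After replacing the diagram by its mirror if necessary, I may assume $A_i \in [[0,\infty]]$ for every $i$; concretely, for the edge $W_{A_i}$ with left/right adjacent regions $R_l(A_i),R_r(A_i)$ and $X_i := R_l(A_i)^{-1}R_r(A_i)$, the local coarse relation says $W_{A_i}$ commutes with $X_i$ and lies in $[[0,\infty]]_{X_i}$. Unwinding the definition of $[[0,\infty]]_{X,<}$: if $X_i >_G 1$ then $W_{A_i}^{p} \geq_G 1$ for the relevant positive $p$, hence $W_{A_i} \geq_G 1$; if $X_i <_G 1$ then $W_{A_i} \leq_G 1$; and if $X_i = 1$ then (since $W_{A_i}$ commutes with $X_i$ trivially) the bound $[[0,b]]$ with $b$ finite forces $W_{A_i}^r \leq_G X_i^s = 1$ for a suitable $b = s/r$, again giving $W_{A_i}\leq_G 1$ — more simply, in the $X_i=1$ case the tangle element is itself a power relation that pins $W_{A_i}$ to be non-positive. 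The upshot of this bookkeeping is a \emph{sign rule}: for each $i$, the sign of $W_{A_i}$ (i.e. whether it is $\geq_G 1$ or $\leq_G 1$) is determined by the sign of $X_i = R_l(A_i)^{-1}R_r(A_i)$, namely $W_{A_i}\geq_G 1 \iff R_r(A_i)\geq_G R_l(A_i)$ and $W_{A_i}\leq_G 1 \iff R_r(A_i)\leq_G R_l(A_i)$.

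Next I would exploit the global structure. Orient the coarse decomposition graph $\Gamma$ and pick a region generator $R_{\max}$ that is $<$-maximal among all region generators (the set of regions is finite, so this exists; the unbounded region $R_0 = 1$ is among them, but $R_{\max}$ may be any maximal one). The key claim is that every tangle edge $W_{A_i}$ incident to $R_{\max}$, when traversed \emph{away} from $R_{\max}$, satisfies $W_{A_i}\geq_G 1$ — this is immediate from the sign rule, since moving from $R_{\max}$ to any region lowers (or keeps equal) the region value, and one checks the orientation conventions of $R_l,R_r$ so that $X_i$ (read in the outward direction) is $\leq_G 1$, forcing the tangle element in that direction to be $\leq_G 1$, equivalently its inverse, read into $R_{\max}$, is $\geq_G 1$. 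Then I follow the argument of Greene: consider a global cycle relation $W_n^{\pm 1}\cdots W_1^{\pm 1}=1$ corresponding to a loop in $\mathbb{R}^2$ based at $R_{\max}$, or more robustly, build an edge-path in $\Gamma$ that starts and ends at $R_{\max}$ and show each successive tangle factor, read in the direction of the path, is $\geq_G 1$ at the moment it first leaves the "current maximum", so that the product of all these factors along a closed loop is $\geq_G 1$; combined with the reverse traversal giving $\leq_G 1$, the only possibility is that every region generator encountered equals $R_{\max}$, i.e. all regions are $<$-equal. Since $R_0 = 1$, this forces every $X_i = 1$, hence every $W_{A_i}$ is trivial or a torsion-free pinned element equal to $1$; tracing back through the reduced and original Brunner presentations (as in Lemma~\ref{lem:non-trivial}) this forces $\pi(D)$, and then $\pi_1(\Sigma_2(L))$, to be trivial — contradicting Lemma~\ref{lem:non-trivial}, since $\Sigma_2(L)$ of a link in $S^3$ is never simply connected once $L$ is nontrivial, and in any case the convention declares the trivial group not left-orderable.

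The cleanest way to organize the middle step is probably via a "potential function" on vertices: assign to each region its $<$-class, observe the sign rule makes each oriented tangle edge monotone with respect to this potential in a direction dictated by $X_i$'s sign, and note that on a planar graph with a global source of relations (the cycle relations) a non-constant such potential is impossible — any local maximum region would be forced strictly above its neighbours, but then an outgoing loop through that region returns a product strictly $>_G 1$, contradicting a global cycle relation $=1$. I expect the main obstacle to be precisely the orientation/sign bookkeeping: carefully matching the conventions for $R_l(A), R_r(A)$, the chosen orientation of edges of $\Gamma$, the direction in which each $W_{A_i}^{\pm1}$ appears in a given cycle relation, and the $X_i>1$ versus $X_i<1$ versus $X_i=1$ trichotomy in the definition of $[[0,\infty]]_{X_i}$, so that the "every incident tangle element points uphill" claim at a maximal region is genuinely forced and not just plausible. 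Once the sign rule is stated precisely and shown to be orientation-coherent around each region, the contradiction via maximality is short, exactly mirroring Greene's proof for alternating diagrams — the point of the theorem being that the coarse presentation reduces the hypothesis on $D$ to the single combinatorial condition $A_i\in[[0,\infty]]$ for all $i$.
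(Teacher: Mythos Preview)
Your approach is essentially the paper's own: pick a $<$-maximal region generator, use the sign rule coming from $A_i\in[[0,\infty]]$ to force all tangle elements on its boundary to be $\geq 1$, and contradict the global cycle relation; handle the degenerate all-regions-equal case via Lemma~\ref{lem:non-trivial}.

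Two points where the paper is cleaner than your sketch and which would dissolve the orientation bookkeeping you are worried about. First, the relevant cycle is the one given by the \emph{boundary of the maximal region} $R$ --- regions are faces of the planar graph $\Gamma$, not vertices, so phrases like ``a loop based at $R_{\max}$'' or ``an edge-path starting and ending at $R_{\max}$'' are not quite right; the edges you want are exactly those bounding the face $R$, each of which has $R$ on one side and some $R_i\leq R$ on the other. Second, rather than tracking the exponents $\pm 1$ in $W_k^{\pm1}\cdots W_1^{\pm1}=1$, observe that the condition $A_i\in[[0,\infty]]$ is invariant under reversing the orientation of the edge (swapping $R_l\leftrightarrow R_r$ replaces $X_i$ by $X_i^{-1}$ and $W_{A_i}$ by $W_{A_i}^{-1}$), so you may reorient every boundary edge so that $R$ is its right-adjacent region and the cycle relation reads $W_1\cdots W_k=1$ with no inverses. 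Then $R_i\leq R$ gives $W_i\geq 1$ directly, with strict inequality for the edge adjacent to a strictly smaller neighbour $R'$, and the contradiction is immediate.
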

\begin{proof}

Assume that $\pi_{1}(\Sigma_{2}(L))$ has a left ordering $<$.
Let $\CB$ be the coarse Brunner's presentation. We prove the case $A_{i} \in [[0,+\infty]]$ for all $i$. The case all $A_{i} \in [[-\infty,0]]$ is proved in a similar way.
Let $\{R_{1},\ldots,R_{m}\}$ be the set of all region generators of $\CB$, including the trivial region generator which corresponds to the unbounded region, and $\{W_{1},\ldots,W_{n}\}$ be a set of tangle generators of $\CB$, where we put $W_{i}=W_{A_{i}}$.

First of all, we consider the case all region generators represents the same element of $\pi_{1}(\Sigma_{2}(L))$. Since the unbounded region generator represents the trivial element of $\pi_{1}(L)$, this implies that all region generators represent the trivial element. 
In such case, $A_{i} \in [[0,+\infty]]$ implies $W_{i}=1$ for all $i$. Then the group $\pi(D)$ defined by reduced Brunner's presentation is trivial, which contradicts Lemma \ref{lem:non-trivial}.
Thus, we may assume that there are at least two distinct region generators.
Let $R$ be the region generator which is $<$-maximal among the set of all region generators. Since we assumed that there are at least two distinct region generators, we may choose $R$ so that there are region generator $R'$ which is adjacent to $R$ and $R'<R$ holds. 

Let us consider the global cycle relation $W_{1}^{\pm1} \cdots W_{k}^{\pm 1}=1$ given by the edge-path representing the boundary of the region $R$.
Since the property $A_{i} \in [[0,+\infty]]$ is independent of the choice of the orientation of the edge of the coarse decomposition graph $\Gamma$, we may choose the orientation of edges of $\Gamma$ so that the global cycle relation is given as $W_{1} \cdots W_{k}=1$.

For each $i$, let $R_{i}$ be the left-adjacent region of $A_{i}$. Since $R_{i} \leq R$, the local coarse relation $W_{i} \in [[0,+\infty]]_{R_{i}^{-1}R}$ implies $W_{i} \geq 1$. Moreover, as we have assumed,  the inequality must be strict for some $i$: $W_{i}>1$ holds for some $i$. Thus, $W_{1}\cdots W_{n} >1$. This is a contradiction.
\end{proof}

As a corollary, we recover a result of Boyer-Gordon-Watson \cite{bgw}, \cite{gr}.

\begin{cor}
\label{cor:alter}
The fundamental group of the double branched covering of an alternating link is not left-orderable.
\end{cor}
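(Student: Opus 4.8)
The plan is to deduce Corollary \ref{cor:alter} directly from Theorem \ref{thm:simalter} by exhibiting, for any alternating link $L$ with a reduced alternating diagram $D$, a tangle-strand decomposition of $D$ all of whose tangles satisfy the required universal range condition. First I would observe that every crossing of $D$ is itself a rational tangle $Q(\pm 1)$, so the \emph{finest} tangle-strand decomposition — the one in which each tangle is a single crossing — is already a legitimate tangle-strand decomposition, provided each such tangle is compatible with the checkerboard coloring (which it is, after possibly rotating the local picture, since the checkerboard surface we chose has white unbounded region and we pick the coloring so that the east/west sides of the crossing-tangle are black). For a single-crossing tangle $Q(\pm 1)$, Proposition \ref{prop:rational} (or the remark following the definition of universal range) gives $Q(\pm 1)\in[[\pm 1,\pm 1]]$, hence $Q(1)\in[[0,+\infty]]$ and $Q(-1)\in[[-\infty,0]]$.

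The key step is then to check that, in a reduced alternating diagram with the checkerboard coloring normalized as above, \emph{all} crossings are of the same type relative to the black surface: either every crossing contributes a $+1$ twisted band (so every crossing-tangle is $Q(1)\in[[0,+\infty]]$) or every crossing contributes a $-1$ twisted band (so every crossing-tangle is $Q(-1)\in[[-\infty,0]]$). This is the classical fact that the checkerboard surface of a reduced alternating diagram has all its twisted bands twisting in the same direction — equivalently, in Brunner's/Goeritz language, all local edge labels $i(e)$ have the same sign. I would phrase it via the alternating condition: going around any crossing, the over/under pattern forces the two black corners to sit in the same rotational position relative to the strands, which is exactly what fixes the sign of the twist; the alternating property propagates this consistently across the whole diagram. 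Once this sign is constant, Theorem \ref{thm:simalter} applies verbatim and yields that $\pi_1(\Sigma_2(L))$ is not left-orderable.

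The main obstacle — really the only nontrivial point — is the sign-constancy claim, i.e. verifying that the maximal disc-twisted band decomposition of the checkerboard surface of a reduced alternating diagram has uniformly-signed twisting, and tying this precisely to the convention (unbounded region white, east/west black, orientation east-to-west) under which $Q(1)$ lands in $[[0,+\infty]]$ rather than $[[-\infty,0]]$. One must be a little careful that passing to the maximal disc-twisted band decomposition (which merges parallel crossings of a twist region into a single band $Q(\pm\frac1m)$) does not mix signs: but parallel crossings in an alternating diagram all twist the same way, so each band inherits a well-defined sign $\pm\frac1m$, and $Q(\frac1m)\in[[0,\frac1m]]\subset[[0,+\infty]]$, $Q(-\frac1m)\in[[-\frac1m,0]]\subset[[-\infty,0]]$ by Proposition \ref{prop:rational} again. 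A minor additional remark: $L$ must be assumed nonsplit for Brunner's presentation (hence $\CB$) to be available, but a split alternating link has a split reduced alternating diagram only in the trivial sense, and in any case a split link has $\pi_1(\Sigma_2(L))$ containing a nonabelian free group, which is left-orderable — so strictly the statement should be read for nonsplit (equivalently, prime, or connected-diagram) alternating links, matching the hypothesis of Brunner's theorem.
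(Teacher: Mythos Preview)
Your approach is essentially the same as the paper's: take the finest tangle-strand decomposition in which each tangle is a single crossing $[\pm 1]$, observe that alternating forces all signs to agree, and invoke Theorem~\ref{thm:simalter}. The paper's proof is two sentences to this effect; your additional care about checkerboard compatibility and the sign-constancy of the twisted bands is reasonable elaboration of the same idea.

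One small error in your closing parenthetical: you write that for a split link $\pi_1(\Sigma_2(L))$ ``contains a nonabelian free group, which is left-orderable,'' and conclude the statement should be restricted to nonsplit links. Containing a left-orderable subgroup does not make a group left-orderable; what matters is that left-orderability passes to \emph{subgroups}. For a split alternating link $L=L_1\sqcup L_2$ one has $\Sigma_2(L)\cong \Sigma_2(L_1)\#\Sigma_2(L_2)\#(S^1\times S^2)$, so $\pi_1(\Sigma_2(L))$ is a free product containing each $\pi_1(\Sigma_2(L_i))$ as a subgroup. By the nonsplit case these are non-left-orderable (and nontrivial), hence $\pi_1(\Sigma_2(L))$ is non-left-orderable as well. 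So the corollary in fact holds for split alternating links too, and your caveat is both unnecessary and incorrectly justified. The restriction to unsplittable links in Brunner's theorem is only needed to run the argument on each nonsplit summand.
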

\begin{proof}
Let $D$ be an alternating link diagram representing an alternating link $L$
By regarding each crossing of $D$ as a tangle part the elementary tangle $[\pm 1]$, we get the tangle-strand decomposition of $D$.
Since $D$ is alternating, all tangle parts have the same sign. Thus by Theorem \ref{thm:simalter}, $\pi_{1}(\Sigma_{2}(L))$ is not left-orderable.
\end{proof}

We remark that as we have observed in Example \ref{exam:univrange}, there are non-alternating algebraic tangles whose universal range are either $[[0,+\infty]]$ or $[[-\infty,0]]$. Thus, links in Theorem \ref{thm:simalter} contain a lot of non-alternating links.

\subsection{Various families of non-left-orderable double branched covering.}
\label{sec:proof}

Next we give other family of links whose double branched cover has non-left-orderable fundamental groups.
These links are not similar to alternating in the sense Theorem \ref{thm:simalter}. These examples are derived from certain quasi-alternating diagrams. See Remark \ref{rem:diagram} given in Section 5.

Before proving the non-left-orderability, we observe the following rather obvious fact.
\begin{lem}
\label{lem:conjugate}
Let $G$ be a group and $A,X,Y \in G$. Assume that $X^{p} = A^{q}$ for some positive integers $p$ and $q$. Then for a left-ordering $<$ of $G$, if $1 \leq Y^{-1}AY$ (resp. $1 \geq Y^{-1}AY$), then $1 \leq Y^{-1}X Y$ (resp. $1 \geq Y^{-1}XY$).
\end{lem}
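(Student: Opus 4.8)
\textbf{Proof plan for Lemma \ref{lem:conjugate}.}

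The statement is an easy consequence of the following elementary fact about left orderings: if $<$ is a left ordering of a group $G$ and $g \in G$, then $1 \leq g$ if and only if $1 \leq g^n$ for every positive integer $n$ (equivalently, $g \geq 1 \iff g^n \geq 1$ for one, hence all, $n \geq 1$). Indeed, if $1 \leq g$ then multiplying on the left by $g \geq 1$ repeatedly gives $1 \leq g \leq g^2 \leq \cdots \leq g^n$; conversely, if $g < 1$ then the same argument applied to $g^{-1} > 1$ (note $g < 1 \iff g^{-1} > 1$, since left-multiplying $g < 1$ by $g^{-1}$ gives $1 < g^{-1}$) yields $g^{-n} > 1$, i.e. $g^n < 1$. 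So the sign of $g$ is detected by the sign of any positive power of $g$.

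Now set $Z = Y^{-1}XY$ and $B = Y^{-1}AY$. Conjugating the hypothesis $X^p = A^q$ by $Y$ gives $Z^p = Y^{-1}X^pY = Y^{-1}A^qY = B^q$. Assume $1 \leq B$ (the case $1 \geq B$ is identical with all inequalities reversed). By the fact above applied to $B$, since $q \geq 1$ we get $1 \leq B^q = Z^p$. Then, since $p \geq 1$, applying the fact above to $Z$ (in the direction ``$Z^p \geq 1 \implies Z \geq 1$'') gives $1 \leq Z = Y^{-1}XY$, which is the desired conclusion. The case $1 \geq B = Y^{-1}AY$ is handled by the same two invocations of the elementary fact with the reversed inequalities, yielding $1 \geq Y^{-1}XY$.

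There is essentially no obstacle here; the only thing to be careful about is that the elementary fact is used in both directions (``sign of $g$ forces sign of $g^n$'' and ``sign of $g^n$ forces sign of $g$''), and that conjugation by $Y$ is an order-preserving bijection of $G$ only after one notes it takes the given relation $X^p=A^q$ to $Z^p=B^q$ — conjugation itself need not preserve the ordering, but we only use it to transport the algebraic identity, not the inequalities. The inequalities are then manipulated purely via the power criterion, which holds in any left ordering.
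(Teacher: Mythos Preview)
Your proof is correct and follows essentially the same route as the paper: conjugate the relation $X^{p}=A^{q}$ by $Y$, use $1\le Y^{-1}AY$ to get $1\le (Y^{-1}AY)^{q}=(Y^{-1}XY)^{p}$, and then conclude $1\le Y^{-1}XY$ from the sign-of-powers criterion. The paper compresses this into a single displayed chain of inequalities, while you spell out the elementary power fact and its two directions explicitly, but the argument is the same.
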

\begin{proof}
By hypothesis,
\[ 1 \leq (Y^{-1}AY) \leq (Y^{-1}AY)^{q} = Y^{-1}X^{p}Y = (Y^{-1}XY)^{p} \]
Thus, $ 1 \leq Y^{-1}XY$. 
\end{proof}

\begin{thm}
\label{thm:main}
Let $L$ be a link in $S^{3}$ which is represented by a diagram Figure \ref{fig:diagram} given in Example \ref{exam:coarse}, and let $[[m_{i},M_{i}]]$ be the universal range of algebraic tangles $\{A_{i}\}$.
Assume that one of the following conditions.
\begin{enumerate}
\item $m_{1},m_{2},m_{4} \geq 1$, $-1 \leq m_{3}, m_{5}, m_{6}$, and  $M_{3},M_{5},M_{6} < 0$
\item $m_{1},m_{2},m_{3},m_{4},m_{5} \geq 1$, and $A_{6} = Q(r)$ where $-1 \leq r < 0$.
\end{enumerate}
Then the fundamental group of the double branched cover $\Sigma_{2}(L)$ is not left-orderable.
\end{thm}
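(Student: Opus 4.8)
The plan is to argue by contradiction: assume $\pi_{1}(\Sigma_{2}(L))$ carries a left ordering $<$, and use the coarse Brunner's presentation from Example \ref{exam:coarse} — generators $W_{1},\dots,W_{6},A,B,C$, local coarse relations $W_{i}\in[[m_{i},M_{i}]]_{X_{i}}$ with $X_{1}=A$, $X_{2}=B$, $X_{3}=C^{-1}$, $X_{4}=B^{-1}A$, $X_{5}=B^{-1}C$, $X_{6}=C^{-1}A$, and the three global cycle relations $W_{6}W_{4}W_{1}=1$, $W_{4}^{-1}W_{5}^{-1}W_{2}=1$, $W_{6}^{-1}W_{3}^{-1}W_{5}=1$ — to force all region generators to be trivial, then invoke Lemma \ref{lem:non-trivial} as in the proof of Theorem \ref{thm:simalter}. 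As a preliminary normalization, if $A$, $B$, $C$ all represent the same element then (since $R_{0}=1$ and $A,B,C$ are connected to it through the diagram) all region generators are trivial; in that case every hypothesis "$m_{i}\ge 1$" forces $W_{i}=1$ and every hypothesis "$M_{i}<0$" likewise forces $W_{i}=1$ (a universal range not containing $0$ whose defining element is trivial makes the tangle element trivial), so $\pi(D)$ is trivial, contradicting Lemma \ref{lem:non-trivial}. So we may assume $A,B,C$ are not all equal, and we work with the induced order on $\{A,B,C\}$.

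For case (1): the conditions $M_{3},M_{5},M_{6}<0$ together with $-1\le m_{3},m_{5},m_{6}$ mean $W_{3},W_{5},W_{6}$ each lie in a universal range contained in $(-\infty,0)$, so each $W_{j}$ ($j=3,5,6$) is $<1$ precisely when its defining element $X_{j}$ is $>1$, and $>1$ precisely when $X_{j}<1$ (reading off Lemma \ref{lem:univ} / the definition of $[[\cdot,\cdot]]$); more usefully, $W_{j}\ne 1$ iff $X_{j}\ne 1$, and the "sign" of $W_{j}$ is opposite to that of $X_{j}$. Meanwhile $m_{1},m_{2},m_{4}\ge 1$ gives $W_{1}\ge 1$ iff $X_{1}\ge 1$, etc. The strategy is: WLOG let $C$ be $<$-maximal among $\{A,B,C\}$ (the diagram has a symmetry permuting the roles, or one checks the three cases by hand). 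Then $C^{-1}A\le 1$ and $C^{-1}B\le 1$, so $W_{6}\ge 1$ and $W_{5}\ge 1$; feeding these into $W_{6}^{-1}W_{3}^{-1}W_{5}=1$, i.e. $W_{3}=W_{5}W_{6}^{-1}$... — at this point one tracks signs carefully, combines with $W_{6}W_{4}W_{1}=1$ and $W_{4}^{-1}W_{5}^{-1}W_{2}=1$, and uses $W_{1}\ge 1 \iff A\ge 1$, $W_{2}\ge 1\iff B\ge 1$, $W_{4}\ge 1\iff B^{-1}A\ge 1$, to derive that all of $A$, $B$, $C^{-1}A$, $C^{-1}B$ equal $1$, hence $A=B=C$, contradicting our reduction. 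For case (2), replace the hypothesis on $W_{6}$ by $A_{6}=Q(r)$ with $-1\le r<0$, which by Proposition \ref{prop:rational} gives the exact relation $W_{6}^{p}=(C^{-1}A)^{q}$ with $q<0$; Lemma \ref{lem:conjugate} (applied with $Y=1$, or to conjugates) then lets us transfer the sign of $C^{-1}A$ to $W_{6}$ with the correct reversal, and the same sign-chasing through the three cycle relations — now with $W_{3},W_{5}$ handled by $m_{3},m_{5}\ge1$ instead — again collapses everything.

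The main obstacle I anticipate is the sign bookkeeping in the cycle relations when the three region generators are only partially ordered: one must handle the case distinctions "which of $A,B,C$ is maximal / minimal" and, within each, show the chain of implications genuinely closes up to $A=B=C$ rather than merely constraining things. Concretely, the delicate point is that from a cycle relation like $W_{6}W_{4}W_{1}=1$ one gets $W_{4}=W_{6}^{-1}W_{1}^{-1}$, and to conclude $W_{4}\le 1$ or $W_{4}\ge 1$ one needs to know $W_{1},W_{6}$ are comparably signed — which requires choosing the maximal region correctly so that all three adjacency inequalities point the same way. I expect that, as in Greene's argument underlying Theorem \ref{thm:simalter}, picking $R$ to be the $<$-maximal region generator and orienting the boundary cycle of $R$ consistently makes all the relevant local coarse relations yield inequalities of the same sign, so that the product of the corresponding $W$'s (with the sign reversals coming from the $M_{j}<0$ tangles, resp. the negative $Q(r)$ tangle) cannot equal $1$ unless every factor is trivial — and then one walks the triviality back through the edge relations. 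The routine verification that the numerology of $m_{i},M_{i}$ in (1) and (2) produces exactly these sign patterns I would relegate to a short case table rather than spell out inline.
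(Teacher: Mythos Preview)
Your overall strategy matches the paper's: assume a left ordering, use the coarse Brunner's presentation of Example~\ref{exam:coarse}, case-split on the relative order of $A,B,C$, and force $A=B=C=1$ to contradict Lemma~\ref{lem:non-trivial}. However, there are two genuine gaps.

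First, the symmetry you invoke to set $C$ $<$-maximal does not exist. In hypothesis (1) the tangles $A_{1},A_{2},A_{4}$ have positive universal range while $A_{3},A_{5},A_{6}$ have negative range, so $C$ (adjacent to $A_{3}$) plays a distinguished role and cannot be permuted with $A$ or $B$. The paper instead normalizes to $A\leq B$ (by passing to the opposite ordering if necessary) and then treats the three sub-cases $A\leq B\leq C$, $A\leq C\leq B$, $C\leq A\leq B$ separately; each needs its own chain of inequalities. (Incidentally, with $C$ maximal one has $B^{-1}C\geq 1$, hence $W_{5}\leq 1$, not $W_{5}\geq 1$ as you wrote: the base element for $W_{5}$ is $B^{-1}C$, not $C^{-1}B$.)

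Second, and more seriously, the ``sign-chasing'' you describe is not sufficient and is not what the argument actually does. Knowing only the signs of the $W_{i}$ lets you feed one cycle relation into another, but it does not close up to equalities. The paper uses the \emph{magnitude} bounds $m_{1},m_{2},m_{4}\geq 1$ and $m_{3},m_{5},m_{6}\geq -1$ to produce sandwiching chains such as, in the sub-case $A\leq B\leq C$ (after first deducing $C\leq 1$),
\[
1 \;\leq\; W_{4}^{-1} \;=\; W_{1}W_{6} \;\leq\; W_{1}(A^{-1}C) \;\leq\; W_{1}A^{-1} \;\leq\; 1,
\]
where the step $W_{6}\leq A^{-1}C$ uses $m_{6}\geq -1$ and the step $W_{1}A^{-1}\leq 1$ uses $m_{1}\geq 1$ together with $A\leq 1$. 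It is this sandwiching, not parity alone, that collapses everything to equalities. For hypothesis (2), the sub-case $A\leq B\leq C$ is substantially harder: Lemma~\ref{lem:conjugate} is applied with the nontrivial conjugator $Y=C^{-1}B$ to the element $W_{6}^{-1}A^{-1}C$ (whose power relation $(W_{6}^{-1}A^{-1}C)^{p}=(A^{-1}C)^{q-p}$ comes from $A_{6}=Q(-p/q)$), yielding $W_{6}C^{-1}B\leq A^{-1}B$, which then feeds into a longer chain. None of this is the ``routine verification'' you anticipate; it is exactly where the specific numerical hypotheses on the $m_{i},M_{i}$ are consumed.
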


\begin{proof}

Let us consider the coarse Brunner's presentation obtained from link diagram $D$, which we have already given in Example \ref{exam:coarse}. In the proof of theorem, we frequently use the relation $W_{2}W_{1}=W_{3}$ deduced from the global cycle relations. 
Recall that by Lemma \ref{lem:non-trivial}, the group obtained by the reduced Brunner's presentation $\pi(D)$ is not the trivial group.
Thus, at least one of the region generators in the coarse Brunner's presentation must be non-trivial.
Assume that $\pi_{1}(\Sigma_{2}(L))$ has a left-ordering $<$.

First we consider the case that the assumption (1) holds.
With no loss of generality, we can assume $A \leq B$. \\

\noindent
{\bf Case 1: $A \leq B \leq C$}\\

In this case by the universal ranges of $W_{4}$, $W_{5}$ and $W_{6}$ we get 
$W_{4} \leq 1$, $W_{5} \leq 1$ and $W_{6} \geq 1$. So $W_{3}=W_{5}W_{6}^{-1}  \leq 1$.
Since $W_{3} \in [[-1, M_{3}]]_{C^{-1}}$, $(-1 \leq M_{3}<0)$, $C \leq 1$.
Thus, we have $A \leq B \leq  C \leq 1$. Since $W_{1} \in [[+1,\infty]]_{A}$, $W_{1}A^{-1} \leq 1$.
Then we obtain an inequality
\[ 1 \leq W_{4}^{-1} = W_{1}W_{6} \leq W_{1}(A^{-1}C) \leq W_{1}A^{-1} \leq 1 \]
Hence the all inequalities appeared in this argument must be equality.
This happens only if $A=B=C=1$, which is a contradiction.\\

\noindent
{\bf Case 2: $A\leq C \leq B$}\\

As in the Case 1, we have $W_{4} \leq  1$, $W_{5} \geq 1$ and $W_{6} \geq 1$. Since $W_{5} \in [[-1,M_{5}]]_{C^{-1}B} $ $(M_{5}<0)$, $W_{5}(C^{-1}B)^{-1} \leq 1$.
Then
\[ W_{2} = W_{5}W_{4} \leq W_{5}(B^{-1}A) = [W_{5}(C^{-1}B)^{-1}](C^{-1}A) \leq W_{5}(C^{-1}B)^{-1} \leq 1\]
hence $A \leq C \leq B \leq 1$. Then as in the Case 1, we get an inequality
\[ 1 \leq W_{4}^{-1} = W_{1}W_{6} \leq W_{1}(A^{-1}C) \leq W_{1}A^{-1} \leq 1. \]
which leads a contradiciton.\\

\noindent
{\bf Case 3: $C \leq A \leq B$}\\

In this case $W_{4} \leq 1$, $W_{5} \geq 1$ and $W_{6} \leq 1$. Then $W_{3}=W_{5}W_{6}^{-1} \geq 1$, so $1 \leq C \leq A \leq B$. Thus, $W_{1} \geq 1$.
Since $W_{3} \in [[-1, M_{3}]]_{C} $ $(-1\leq M_{3}<0)$, $W_{3} \leq C$.
However we have an inequality,
\[ B \leq W_{2} \leq W_{2}W_{1} = W_{3} \leq C \]
Thus, the all inequalities appeared in this argument must be equality.
This happens only if $A=B=C=1$, which is a contradiction.\\

Next we consider the case the assumption (2) holds. 
With no loss of generality, we may assume $A \leq B$.\\

\noindent
{\bf Case 1: $A \leq B \leq C$}\\

In this case $W_{4} \leq 1$, $W_{5} \geq 1$, and $W_{6} \geq 1$.
First of all, we determine the parity of $A, B$ and $C$.
Assume that $C < 1$. Then $W_{1} < 1$, $W_{2}<1$, $W_{3} >1$. This contradicts the grobal cycle relation   
$W_{3}=W_{2}W_{1}$. Thus $C \geq 1$. By a similar argument, we conclude $A \leq 1$. Then we get an inequality
\[ W_{2}^{-1} = W_{1}W_{3}^{-1} \geq W_{1} C = AW_{1}A^{-1}C \geq AW_{1}W_{6} = AW_{4}^{-1} \geq A A^{-1}B = B \]
hence $W_{2}^{-1} \geq B$. Since $W_{2} \in [[+1,+\infty]]_{B}$, this implies $B \leq 1$.

Recall that we have assumed $A_{6} = Q(r)$ $(-1\leq r < 0)$. Let $r= \frac{-p}{q}$ $( q, p \in \Z,\;\; q > p> 0$. Then $(W_{6}^{-1}A^{-1}C)^{p} = (A^{-1}C)^{q-p}$.
Since $(C^{-1}B)^{-1} A^{-1}C (C^{-1}B) =  B^{-1}C A^{-1}B \geq B^{-1}C \geq 1$, by Lemma \ref{lem:conjugate}, $(C^{-1}B)^{-1} (W_{6}^{-1}A^{-1}C)(C^{-1}B) \geq 1$.
Hence
\[ 1 \leq (C^{-1}B)^{-1} W_{6} C^{-1}B \leq (C^{-1}B)^{-1} A^{-1}C( C^{-1}B) = B^{-1}C A^{-1}B \]
so we get an inequality $1 \leq W_{6} C^{-1}B \leq A^{-1}B$.
Now we are ready to deduce a contradiction.
By observed inequalities, we get
\[ W_{1}^{-1} \leq W_{1}^{-1}W_{2}^{-1} = W_{3}^{-1}= W_{6}W_{5}^{-1} \leq W_{6}C^{-1}B \leq A^{-1}B \leq A^{-1} \]
This implies all inequalities appeared in this argument must be equalities, so $A=B=C=1$, which is a contradiction. \\

\noindent
{\bf Case 2:} $A \leq C \leq B$ or $C \leq A \leq B$\\

If $B < 1$, then $A,C,B <1$ so we get $W_{2} < 1$, $W_{1}<1$, and $W_{3}>1$. This contradicts the global cycle relation $W_{2}W_{1}= W_{3}$ we have $B \geq 1$.
On the other hand, in this case, $W_{4} \leq 1$, $W_{5} \leq 1$ so $W_{2}=W_{5}W_{4} \leq 1$. 
But $B\geq 1$ implies $W_{2} \geq 1$, so above inequalities must be equalities. 
This implies $A=B=C=1$, which is a contradition.

\end{proof}

Links in Theorem \ref{thm:main} contains an interesting family of knots.
Recall that an oriented knot $K$ is called {\it positive} if $K$ is represented by a diagram $D$ having only positive crossings. 

\begin{cor}
\label{cor:gtwo}
Let $K$ be a knot in $S^{3}$ which is positive and genus two.
Then the fundamental group of the double branched cover $\Sigma_{2}(K)$ is not left-orderable. 
\end{cor}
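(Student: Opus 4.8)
The plan is to reduce to Theorem~\ref{thm:main} (together with Corollary~\ref{cor:alter} for the alternating cases, and Theorem~\ref{thm:main2} in any degenerate cases). Given a positive knot $K$ of genus two, I would first produce a diagram $D$ of $K$ of the form of Figure~\ref{fig:diagram} (used in Example~\ref{exam:coarse}) whose algebraic tangles $A_1,\dots,A_6$ have universal ranges satisfying hypothesis (1) or (2) of Theorem~\ref{thm:main}, and then invoke that theorem directly.

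For the normal form, fix a positive diagram $D_0$ of $K$. Since positive diagrams are homogeneous, Seifert's algorithm applied to $D_0$ realises the genus (Cromwell), so the Bennequin surface $F$ has first Betti number $2g(K)=4$; hence $K$ bounds a surface built from a disc by plumbing four bands, each positively twisted because all crossings of $D_0$ are positive. A positive knot of genus two then falls into a short list of plumbing patterns (here a classification of positive knots of genus two is invoked), and for each pattern the boundary knot is presented either by an alternating diagram, handled by Corollary~\ref{cor:alter}, or by a diagram whose checker board surface, under its maximal disc--twisted-band decomposition, has the coarse decomposition graph underlying Figure~\ref{fig:diagram} (the complete graph on four vertices), with each of the six tangles a positively twisted rational tangle. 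Choosing the tangle--strand decomposition with the checker board colouring normalised as in Section~3 (unbounded region white), one reads off that each tangle sitting ``horizontally'' relative to this colouring is $Q(q/p)$ with $q/p\ge 1$, hence has universal range $[[q/p,q/p]]\subseteq[[1,+\infty]]$ by Proposition~\ref{prop:rational}, while the tangle(s) sitting ``vertically'' become $Q(r)$ with $-1\le r<0$; the resulting sign pattern on $A_1,\dots,A_6$ is precisely (1) or (2) of Theorem~\ref{thm:main}, so non-left-orderability follows.

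The routine parts are the verification $b_1(F)=4$ and the computation of the individual universal ranges via Propositions~\ref{prop:rational} and~\ref{prop:algebraic}. The main obstacle is the middle step: establishing the normal form for positive knots of genus two, and --- equally delicate --- tracking the checker board colouring along the way so that the six universal ranges come out with exactly the signs demanded by the hypotheses of Theorem~\ref{thm:main}, rather than a pattern that Theorem~\ref{thm:main} (or Theorem~\ref{thm:main2}) does not cover.
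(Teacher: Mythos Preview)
Your strategy is exactly the paper's: split off the alternating case via Corollary~\ref{cor:alter}, then show every non-alternating positive genus-two knot admits a diagram of the shape in Figure~\ref{fig:diagram} with universal ranges meeting hypothesis (1) or (2) of Theorem~\ref{thm:main}. The ``main obstacle'' you flag --- the normal form --- is resolved in the paper not by analysing plumbing patterns from scratch but by quoting a classification of Jong--Kishimoto: every non-alternating positive knot of genus two has a diagram obtained from one of three explicit diagrams $9_{39}^{+}$, $9_{41}^{+}$, $12_{1202}^{+}$ by $\ttwo$-moves (which only insert full twists inside the tangle slots, so the universal-range conditions are preserved), and one checks directly that the $9_{39}^{+}$-family satisfies (2) while the $9_{41}^{+}$- and $12_{1202}^{+}$-families satisfy (1). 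Theorem~\ref{thm:main2} is not needed.
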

\begin{proof} 
As we have seen in Corollary \ref{cor:alter}, for an alternating link $L$, $\pi_{1}(\Sigma_{2}(L))$ is not left-orderable, so we restrict our attention to non-alternating links.
Jong-Kishimoto showed that a non-alternating positive knot of genus is represented by a diagram
obtained from three diagrams $9_{39}^{+},9_{41}^{+}$ and $12^{+}_{1202}$ by performing the $\ttwo$-moves \cite{jk}. See Figure \ref{fig:gent2}.
A knot diagram obtained from the diagram $9_{39}^{+}$ belongs to the diagrams in Theorem \ref{thm:main}  (2), and a knot diagram obtained from the diagram $9_{41}^{+}$ or $12_{1202}^{+}$ belongs to the diagrams in Theorem \ref{thm:main} (1). Thus the double branched coverings of these knots have the non-left-orderable fundamental group.
\end{proof}

\begin{figure}[htbp]
 \begin{center}
\includegraphics[width=110mm]{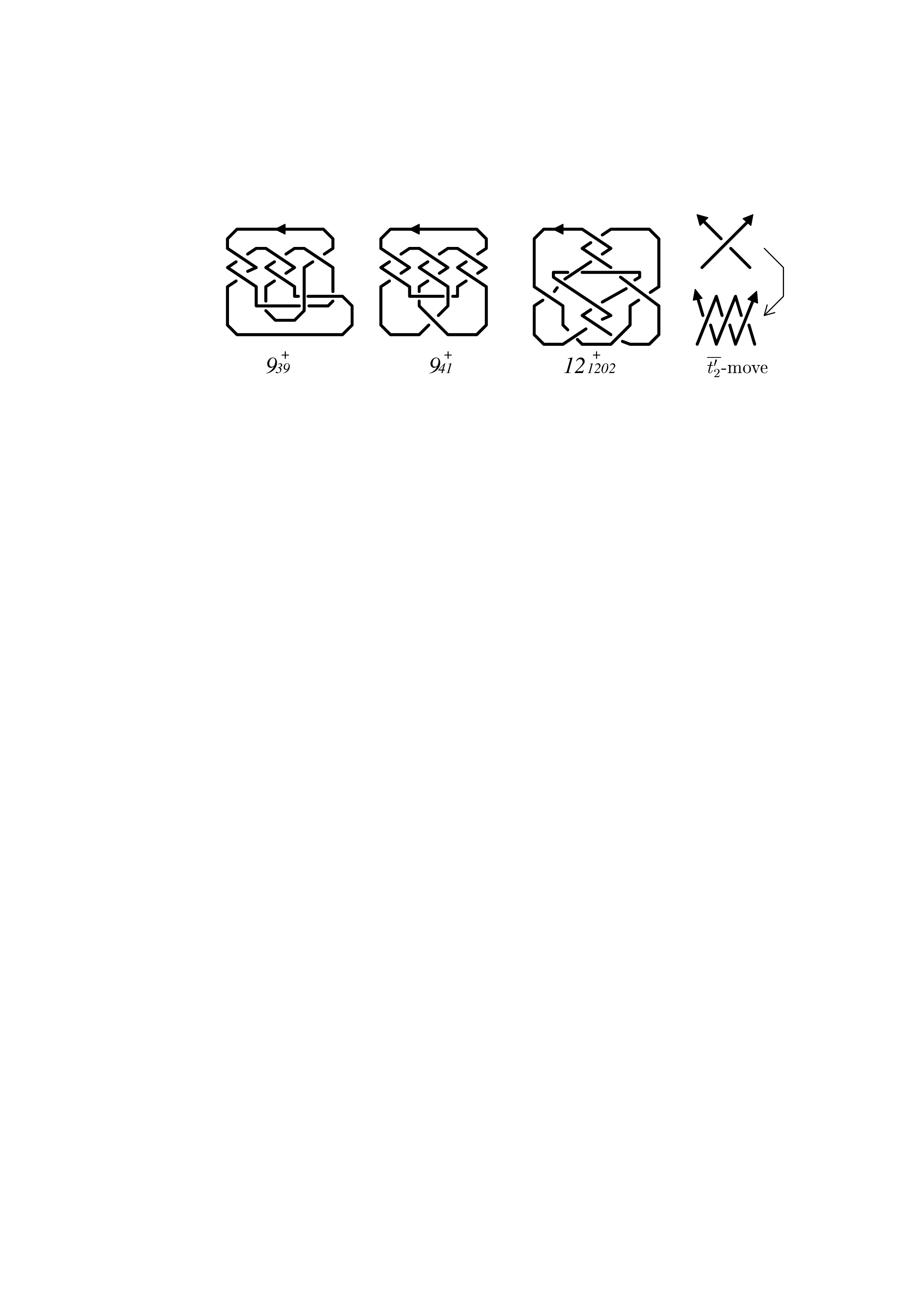}
 \end{center}
 \caption{Generator of genus two positive non-alternating knots and $\ttwo$-move}
 \label{fig:gent2}
\end{figure}

Next we give another example of links having more complicated diagram.
 \begin{thm}
 \label{thm:main2}
 Let $L$ be a link in $S^{3}$ which is represented by a diagram in Figure \ref{fig:main2}, where $A_{i}$ are algebraic tangles. Let $[[m_{i},M_{i}]]$ be the universal range of $A_{i}$. Assume the following conditions.
\begin{enumerate}
\item $A_{1}= Q(r)$ $(\Q \in r, \; r \geq 1)$ and $A_{4}=A_{10}= Q(-1)$.
\item $m_{2}, m_{3} \geq -1$.
\item $M_{2},M_{3},M_{5},M_{6},\ldots,M_{9} <0$
\end{enumerate}

Then $\pi_{1}(\Sigma_{2}(L))$ is not left-orderable.

\begin{figure}[htbp]
 \begin{center}
\includegraphics[width=110mm]{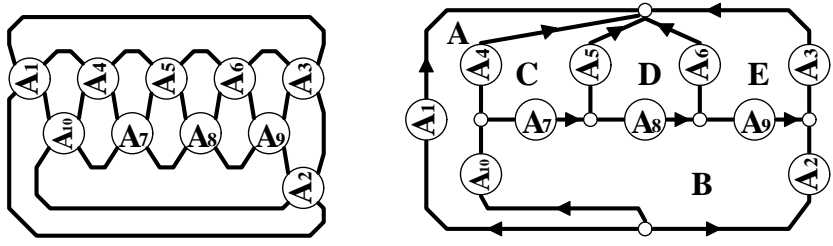}
 \end{center}
 \caption{Link diagram and the coarse decomposition graph}
 \label{fig:main2}
\end{figure}

 \end{thm}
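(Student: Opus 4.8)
The plan is to run the same strategy as in the proof of Theorem~\ref{thm:main}: assume that $\pi_{1}(\Sigma_{2}(L))$ carries a left-ordering $<$, write down the coarse Brunner's presentation associated to the tangle-strand decomposition of Figure~\ref{fig:main2}, and then derive a contradiction by a case analysis on the relative $<$-order of the region generators.

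First I would record the sign behaviour of each tangle generator forced by the hypotheses. For every index $i$ with $M_{i}<0$ (so $i\in\{2,3,5,6,\ldots,9\}$), the relation $W_{i}\in[[m_{i},M_{i}]]_{R_{l}(A_{i})^{-1}R_{r}(A_{i})}$ forces $W_{i}>1$, $W_{i}=1$, or $W_{i}<1$ according as $R_{l}(A_{i})>R_{r}(A_{i})$, $R_{l}(A_{i})=R_{r}(A_{i})$, or $R_{l}(A_{i})<R_{r}(A_{i})$; for $i\in\{2,3\}$ the extra bound $m_{i}\geq-1$ gives the refined one-sided inequality $W_{i}(R_{r}(A_{i})^{-1}R_{l}(A_{i}))^{-1}\leq 1$ when the relevant region difference is $\geq 1$, exactly as the bounds ``$-1\leq m_{3}$'' etc.\ were used in Theorem~\ref{thm:main}. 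The tangles $A_{4}=A_{10}=Q(-1)$ give the exact identities $W_{4}=R_{r}(A_{4})^{-1}R_{l}(A_{4})$ and $W_{10}=R_{r}(A_{10})^{-1}R_{l}(A_{10})$, and $A_{1}=Q(r)$ with $r\geq 1$ gives $W_{1}^{p}=(R_{l}(A_{1})^{-1}R_{r}(A_{1}))^{q}$ for some positive integers $p,q$; by Lemma~\ref{lem:conjugate} this lets me transport one-sided inequalities between (a conjugate of) $W_{1}$ and $R_{l}(A_{1})^{-1}R_{r}(A_{1})$, which is the device used for the tangle $A_{6}=Q(r)$ in the treatment of assumption~(2) of Theorem~\ref{thm:main}.

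Next I would read off from Figure~\ref{fig:main2} the global cycle relations among $W_{1},\ldots,W_{10}$ --- one for the boundary of each bounded region of the coarse decomposition graph $\Gamma$ --- and, choosing the orientations of the edges of $\Gamma$ conveniently, combine them so as to express the ``bad'' generators $W_{2},W_{3},W_{5},\ldots,W_{9}$ in terms of the others, in the spirit of the identity $W_{2}W_{1}=W_{3}$ used throughout the proof of Theorem~\ref{thm:main}. By Lemma~\ref{lem:non-trivial}, $\pi(D)$ is nontrivial, so not all region generators represent the trivial element; since the unbounded region generator is trivial, there is at least one strict inequality among the region generators. I then fix an enumeration $A,B,C,\ldots$ of the region generators and split into cases according to which chains of inequalities hold between them. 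In each case the sign data above pins down the signs of $W_{2},\ldots,W_{9}$, the identities pin down $W_{1},W_{4},W_{10}$, and feeding these into a suitably chosen global cycle relation yields a chain of inequalities of the form $1\leq\cdots\leq 1$; hence every inequality along the way is an equality, which forces all region generators entering it to coincide, and then, tracing through the remaining cycle relations, all region generators equal the trivial one --- contradicting Lemma~\ref{lem:non-trivial}.

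The hard part is purely combinatorial: the diagram of Figure~\ref{fig:main2} has more tangles and more region generators than that of Example~\ref{exam:coarse}, so there are more cases and each chain of inequalities has to be threaded through a longer portion of $\Gamma$; the content of each case is to guess the right cyclic word in the $W_{i}$'s that is simultaneously bounded below and above by $1$. The use of Lemma~\ref{lem:conjugate} to absorb the non-integral tangle $A_{1}=Q(r)$, together with the asymmetry between the roles of the tangles with $m_{i}\geq-1$ and those with only $M_{i}<0$, is precisely what makes conditions (1)--(3) the ones under which the argument closes.
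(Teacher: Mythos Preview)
Your plan is the paper's plan, and everything you say about how the hypotheses will be used (sign of $W_i$ from $M_i<0$, the exact identities $W_4=C^{-1}A$, $W_{10}=B^{-1}A$ from $A_4=A_{10}=Q(-1)$, and Lemma~\ref{lem:conjugate} to handle $A_1=Q(r)$) is correct. But what you have written is an outline, not a proof: you never write down the actual region generators $A,B,C,D,E$, the actual cycle relations
\[
W_{1}=W_{4}W_{10},\quad W_{9}W_{8}W_{7}W_{10}=W_{2},\quad W_{4}=W_{5}W_{7},\quad W_{5}=W_{6}W_{8},\quad W_{6}=W_{3}W_{9},
\]
or any of the chains of inequalities. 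The sentence ``feeding these into a suitably chosen global cycle relation yields a chain of inequalities of the form $1\leq\cdots\leq 1$'' is precisely the content of the theorem, and it is not automatic.

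In particular, the paper's argument is \emph{not} a brute-force enumeration of orderings of five region generators. After assuming $A\leq B$ there are only two cases: either $B$ is $<$-maximal among $\{A,B,C,D,E\}$, or it is not. The second case is short (if $C$, $D$, or $E$ is maximal, a single three-term cycle relation $W_4=W_5W_7$, $W_5=W_6W_8$, or $W_6=W_3W_9$ already collapses). The first case is the real work and does not follow from ``sign data plus one cycle relation'': one first shows $B\leq 1$, then gets the chain $A\leq C\leq D\leq E\leq B\leq 1$, then from $W_4W_{10}W_1^{-1}=1$ extracts $C\geq AB^{-1}$, then applies Lemma~\ref{lem:conjugate} to $(A^{-1}W_1)^{q}=A^{p-q}$ under conjugation by $B$ to get $W_1B^{-1}\leq C$, and only then closes the loop via $E\leq W_3=W_1W_2^{-1}\leq W_1B^{-1}\leq C$, contradicting $C\leq E$. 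None of these intermediate steps is visible from your description, and the specific place where Lemma~\ref{lem:conjugate} enters (conjugation by $B$, applied to $A^{-1}W_1$ rather than to $W_1$ itself) is the step you would actually have to discover. Until that case is written out, the proposal is a correct strategy but not yet a proof.
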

 
 \begin{proof}
 The coarse decomposition graph of the link diagram is given in Figure \ref{fig:main2} right.
 Let us take region generators $A,B,C,D$ and $E$ as in Figure \ref{fig:main2} right and let $W_{i}=W_{A_{i}}$.
 First we write down the coarse Brunner's presentation.\\
 
\noindent 
 {\bf [Generators:]}
\[  W_{1}, W_{2},\ldots, W_{10},A,B,C,D,E\]
{\bf [Local Coarse relations:]}
\[ 
\begin{array}{ll}
W_{1} \in [[m_{1},M_{1}]]_{A} & W_{2} \in [[m_{2},M_{2}]]_{B^{-1}} \\
W_{3} \in [[m_{3},M_{3}]]_{E^{-1}} & W_{4} \in [[m_{4},M_{4}]]_{A^{-1}C} \\
W_{5} \in [[m_{5},M_{5}]]_{C^{-1}D} & W_{6} \in [[m_{6},M_{6}]]_{D^{-1}E} \\
W_{7} \in [[m_{7},M_{7}]]_{C^{-1}B} & W_{8} \in [[m_{8},M_{8}]]_{D^{-1}B}\\
W_{9} \in [[m_{9},M_{9}]]_{E^{-1}B} & W_{10} \in [[m_{10},M_{10}]]_{A^{-1}B}
\end{array}
\]
{\bf [Global cycle relations:]}
\[ 
\begin{array}{lll}
W_{1} = W_{4}W_{10}, & W_{9}W_{8}W_{7}W_{10}=W_{2}, & \\
W_{4}=W_{5}W_{7}, & W_{5}=W_{6}W_{8}, & W_{6}=W_{3}W_{9}
\end{array}
\]

Here we remove the trivial region generator to make the presentration simple.
We remark that by global cycle relations, $W_{1}=W_{3}W_{2}$ holds.
By assumption (1), we have $W_{1} = A^{m}$, $W_{4}=C^{-1}A$, and $W_{10} = B^{-1}A$.

Assume that $\pi_{1}(\Sigma_{2}(L))$ has a left ordering $<$.
With no loss of generality, we may assume $A \leq B$ holds.\\

\noindent
{\bf Case 1:} $B$ is the $<$-maximal element among the non-trivial region generators $\{A,B,C,D,E\}$.\\

In this case $W_{7},W_{8},W_{9},W_{10} \leq 1$. Since $W_{9}W_{8}W_{7}W_{10}=W_{2} $, $W_{2} \leq 1$. $W_{2} \in [[\infty,M_{2}]]_{B^{-1}}$ ($M_{2}<0$), we conclude $B \leq 1$. Thus, all region generators $A,B,C,D,E$ are either trivial or $<$-negative. 
Now $A,E \leq 1$ implies that  $W_{1} \leq 1$ and $W_{3} \leq 1$.
By the global cycle relations $W_{4}=W_{5}W_{7}$, $W_{5}=W_{6}W_{8}$ and $W_{6}=W_{3}W_{9}$, we conclude $W_{4},W_{5},W_{6} \leq 1$.
From inequalities $W_{4},W_{5},W_{6} \leq 1$, we get inequalities of region generators $A \leq C \leq D\ \leq E \leq B \leq 1$.

Now we are ready to deduce a contradiction. By hypethesis, $AW_{1}^{-1} \geq 1$, hence 
\[ 1 = W_{4}W_{10}W_{1}^{-1} = (C^{-1}A)(B^{-1}A)(W_{1}^{-1}) \geq  C^{-1}A B^{-1} \]
Thus, we get $C \geq AB^{-1}$.

In particular, $1 \geq BC \geq BAB^{-1}$, so $1 \leq BA^{-1}B^{-1}$.
Recall that we have assumed that $A_{1} = Q(p\slash q)$, $(p>q>0)$.
Thus $(A^{-1}W_{1})^{q}=A^{p-q}$.
Therefore by Lemma \ref{lem:conjugate}, $B(A^{-1}W_{1})B^{-1} \leq 1$ hence 
\[ W_{1}B^{-1} \leq AB^{-1} \leq C. \]

Finally, we observe the inequaltiy
\[ E \leq W_{3} = W_{1}W_{2}^{-1} \leq  W_{1} B^{-1} \leq C \]
Hence we conclude $C \geq E$, so the all inequality appeared in this argument must be equality.
This implies $A=B=C=D=E=1$, which is a contradiction.\\

\noindent
{\bf Case 2:} $B$ is not the $<$-maximal element among the non-trivial region generators $\{A,B,C,D,E\}$.\\

Since we have assumed $A \leq B$, we may assume that $A$ is not the $<$-maximal.
Assume that $C$ is the $<$-maximal. Then $W_{5},W_{7} \leq 1$ and $W_{4} \leq 1$.  By the global cycle relation $W_{4}=W_{5}W_{7}$, we conclude these three inequalities must be equalities.
This implies  $A=B=C=D=E=1$, which is a contradiction.

Similarly, if $D$ (resp. $E$) is $<$-maximal, then $W_{6},W_{8}>1$ and $W_{5}<1$ (resp.  $W_{3},W_{9}>1$ and $W_{6}<1$), which leads the contradiction via the global cycle relation $W_{5}=W_{6}W_{8}$ (resp. $W_{6}=W_{3}W_{9}$).
 \end{proof}

\section{Remarks on the L-space conjecture}
 
We close the paper by giving short remark on the relationships between our works and the L-space conjecture. 
  
In 3-manifold topology, it is an interesting problem to study the relationships between orderability of the fundamental groups and the topology or geometry of 3-manifolds. Boyer-Rolfsen-Wiest showed that if the fundamental group of a 3-manifold $M$ is not left-orderable, then $M$ is a rational homology 3-sphere \cite{brw}. A 3-manifold $M$ is an {\em L-space} if $M$ is a rational homology sphere and the rank of the (hat version of) Heegaard Floer homology $\widehat{HF}(M)$ is equal to $|H_{1}(M;\Z)|$, the cardinal of the 1st homology group of $M$. L-spaces include 3-manifolds having the spherical geometry, in particular, lens spaces \cite{os}. 
Recall that we have adapted the convention that the trivial group is not left-orderable, so $S^{3}$ is considered as an L-space with non-left-orderable fundamental group. 

As for the orderability of the fundamental groups of 3-manifolds, there is a remarkable conjecture:
 
\begin{conj}[L-space conjecture \cite{bgw}]
The fundamental group of a rational homology 3-sphere $M$ is non-left-orderable if and only if $M$ is an L-space.
\end{conj}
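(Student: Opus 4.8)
This statement is the celebrated \emph{L-space conjecture}, and it is wide open; what follows is therefore not a proof but an outline of the only strategy by which one currently attacks it, together with the obstruction that blocks it. The plan is to split the biconditional into its two implications and to route each of them through a conjecturally equivalent third condition — the (non\nobreakdash-)existence of a co-oriented taut foliation, the so-called ``L-space triad''. Concretely, one tries to establish
\[
\pi_{1}(M)\text{ not left-orderable}\iff M\text{ admits no co-oriented taut foliation}\iff M\text{ is an L-space},
\]
and then reads off the conjecture from the two equivalences.

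For the equivalence on the right, one direction is already a theorem: if $M$ carries a co-oriented taut foliation then, by Ozsv\'ath--Szab\'o together with the later refinements that remove the smoothness hypothesis, the rank of $\widehat{HF}(M)$ exceeds $|H_{1}(M;\Z)|$, so $M$ is not an L-space. The missing direction, ``no co-oriented taut foliation $\Rightarrow$ L-space'', would be approached by constructing a taut foliation on any rational homology sphere that violates the Heegaard Floer condition; this is known for graph manifolds and for Seifert fibered spaces but not in general, and I would first try to push those techniques (Gabai-style sutured hierarchies, branched surfaces) to larger classes, e.g. toroidal pieces glued along the JSJ decomposition.

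For the equivalence on the left, one would have to produce a co-oriented taut foliation from a left-ordering of $\pi_{1}(M)$ — using the order to build a faithful action on the line, thickening it to a lamination, and filling the complementary regions — and, conversely, to extract a left-ordering of $\pi_{1}(M)$ from the universal-circle action associated to a taut foliation (the point being to promote an action on $S^{1}$ to an action on $\R$, which is exactly where atoroidality or the rational-homology-sphere hypothesis ought to enter). Here again both directions are available only for restricted classes, and making either one work in general is, I expect, the main obstacle: there is no known direct bridge between Heegaard Floer homology and orderability of the fundamental group, so the argument is forced through foliations, and the two ``hard halves'' above — ``no taut foliation $\Rightarrow$ L-space'' and ``left-orderable $\Rightarrow$ taut foliation'' — each remain genuinely open.

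The results of the present paper bear on the conjecture only in a limited, confirmatory way, and I would cite them merely as consistency checks: the links treated in Theorem \ref{thm:simalter} and in Theorems \ref{thm:main} and \ref{thm:main2} arise from alternating or quasi-alternating diagrams, so their double branched covers are L-spaces by Ozsv\'ath--Szab\'o, and the coarse-presentation computations show that the predicted non-left-orderability indeed holds for these new families — exactly as the conjecture demands (cf.\ Remark \ref{rem:diagram}).
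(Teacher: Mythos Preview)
The statement is a \emph{conjecture}, not a theorem, and the paper makes no attempt to prove it; there is therefore no ``paper's own proof'' against which to compare your proposal. You recognize this at the outset, which is the correct response.

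Your outline of the taut-foliation ``triad'' strategy is standard background and is accurate as far as it goes, but note that the paper itself says nothing about foliations or universal circles: Section~5 simply records the conjecture, lists the classes of manifolds for which it has been verified, and then explains how the paper's own results fit in as further evidence. Your final paragraph captures this correctly, with one caveat worth sharpening: the paper is explicit that it does \emph{not} know whether all links in Theorems~\ref{thm:simalter}, \ref{thm:main}, and \ref{thm:main2} are quasi-alternating, so those families serve both as confirmations (for the links that are quasi-alternating) and as potential counterexamples (for those that might not be). Your summary slightly understates this second possibility.
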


This conjecture is verified in many cases, such as Seifert Fibered spaces \cite{wa}, or more generally non-hyperbolic geometric 3-manifolds \cite{bgw}, and some other cases. There are many examples 3-manifolds having non-left-orderable fundamental groups \cite{dpt},\cite{rs},\cite{rss} and many of them are confirmed to be L-spaces \cite{cw1},\cite{cw2},\cite{p}. Conversely, many known L-spaces, such as the double branched covering of alternating links, or one obtained as some Dehn surgeries, are shown to have non-left-orderable fundamental group \cite{bgw}, \cite{gr}. (See Corollary \ref{cor:alter}).

Now let us turn to the relationships between our results and the L-space conjecture.
For a 3-manifold $M$ obtained as a double branched cover of a link, there is a useful criterion to show $M$ is an L-space: The double branched covering of a quasi-alternating link $L$ is an $L$-space \cite{os}. 

Recall that a link is called {\em quasi-alternating} if it belongs to the set $\mQ$ which is the smallest set of links characterized by the following two properties:
\begin{enumerate}
\item unknot $ \in \mQ$.
\item If $L$ has a diagram $D$ with a crossing $c$ such that
\begin{enumerate}
\item Two smoothing $D_{0}$ and $D_{\infty}$ at $c$ (See Figure \ref{fig:smooth}) represents links $L_{0}$, $L_{\infty}$ both of which belong to $\mQ$.
\item $\det(L_{0})+ \det(L_{\infty}) = \det(L)$.
\end{enumerate}
then $L$ belong to $\mQ$. Such a crossing $c$ is called a {\em quasi-alternating crossing}.

\end{enumerate} \begin{figure}[htbp]
 \begin{center}
\includegraphics[width=40mm]{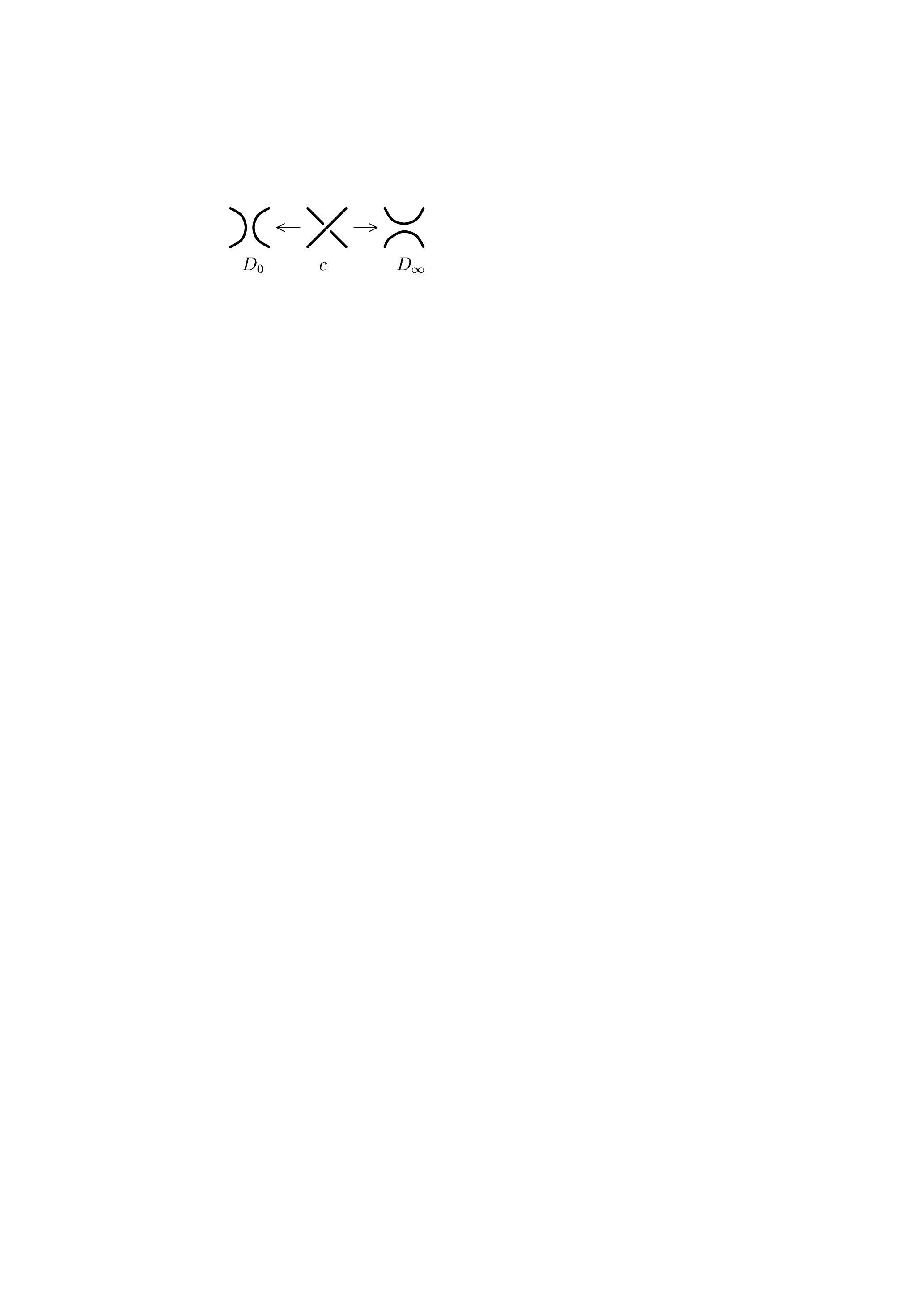}
 \end{center}
 \caption{Smoothing $D_{0}$ and $D_{\infty}$}
 \label{fig:smooth}
\end{figure}

Alternating links are quasi-alternating. In fact, all crossing points of an alternating knot diagram are quasi-alternating crossings.

Quasi-alternating links have a nice property with respect to a tangle replacement operation.
A link obtained by replacing a quasi-alternating crossing with a rational tangle which extends the crossing is also quasi-alternating \cite{ck}. By using this property of quasi-alternating links, we can confirm that many links in Theorem \ref{thm:simalter}, Theorem \ref{thm:main} and Theorem \ref{thm:main2} are quasi-alternating.
So our results provide a lot of new examples of L-spaces with non-left-orderable fundamental groups. 
For example, the positive knots of genus two are quasi-alternating \cite{jk}, hence by Corollary \ref{cor:gtwo} their double branched coverings are L-spaces having non-left orderable fundamental groups.

On the other hand, we do not know whether {\em all} links in Theorem \ref{thm:simalter}, Theorem \ref{thm:main} and Theorem \ref{thm:main2} are quasi-alternating or not. In particular we do not know whehter their double branched covering $\Sigma_{2}(L)$ are L-spaces or not. Thus our family of links also  provides a lot of candidates of the counter examples of the L-space conjecture.

\begin{rem}
\label{rem:diagram}
The diagram in Theorem \ref{thm:main} and Theorem \ref{thm:main2} are obtained by modifying quasi-alternating, but non-alternating knot diagrams.
The diagram in Theorem \ref{thm:main} is found by generalizing the diagram $9_{41}^{+}$ and $9_{39}^{+}$ in Figure \ref{fig:gent2}, and the diagram in Theorem \ref{thm:main2} is found by generalizing the quasi-alternating diagram given in \cite[Fig.4]{os2}. 

In general, by using the coarse Brunner's presentation argument, one can find other examples of links whose double branched cover have non-left-orderable fundamental group by modifying quasi-alternating links in an appropriate way (that is, by replacing each crossing with an algebraic tangle having certain universal range). The main point is that the obtained family of links might contain non-quasi-alternating links, so it is unknown whether their double branched coverings are L-spaces or not.
\end{rem}

Finally we remark that the coarse presentation method also can be applied to show non-left-prderability of 3-manifolds obtained as not only the double branched coverings, but also as Dehn surgeries, since well-known Montesinos trick \cite{mo} relates some Dehn surgeries and double branched covering constructions.
It is an interesting problem to construct a nice coarse presentation of the fundamental group of 3-manifolds obtained as other constructions, such as Dehn surgery, more general branched coverings, and Heegaard splittings.

\end{document}